\pgfplotsset{width=9cm,compat=1.5.1}
\newcommand\ip[2]{\ensuremath{\langle#1 , #2\rangle}}
\newcommand\tr{\mathop{\rm tr}\nolimits}
\newcommand{\norm}[1]{\|#1\|}
\def\C{\mathbb{C}}
\def\R{\mathbb{R}}
\def\F{\mathbb{F}}
\def\M{\mathcal{M}}
\def\S{\mathcal{S}}
\def\a{\mathbf{a}}
\def\b{\mathbf{b}}
\def\e{\mathbf{e}}
\def\v{\mathbf{v}}
\def\w{\mathbf{w}}
\def\x{\mathbf{x}}
\def\y{\mathbf{y}}
\def\z{\mathbf{z}}
\def\0{\mathbf{0}}
\numberwithin{equation}{section}
\newtheorem{lemma}{Lemma}[section]
\newtheorem{theorem}[lemma]{Theorem}
\newtheorem{corollary}[lemma]{Corollary}
\newtheorem{conjecture}[lemma]{Conjecture}
\newtheorem{fact}{Fact} 
\newtheorem{example}{Example}
\begin{document}

\title{
    Generalizing the Cauchy--Schwarz inequality:\\
    Hadamard powers and tensor products
}

\author{  
    Nathaniel Johnston\textsuperscript{1}, Sarah Plosker\textsuperscript{2}, Charles Torrance\textsuperscript{1}, and Luis M.~B.~Varona\textsuperscript{1,3}
}

\maketitle

\begin{abstract}
    We explore and generalize a Cauchy--Schwarz-type inequality originally proved in [Electronic Journal of Linear Algebra 35, 156--180 (2019)]:\[\|\v^2\|\|\w^2\| - \ip{\v^2}{\w^2} \leq \|\v\|^2\|\w\|^2 - \ip{\v}{\w}^2 \quad \text{for all} \quad \v,\w \in \R^n.\]We present three new proofs of this inequality that better illustrate ``why'' it is true and generalize it in several different ways: we generalize from vectors to matrices, we explore which exponents other than $2$ result in the inequality holding, and we derive a version of the inequality involving three or more vectors.\\
    
    \medskip

    \noindent \textbf{Keywords:}  Cauchy--Schwarz inequality, matrix inequalities, linear algebra\medskip
	
	\noindent \textbf{MSC2010 Classification:}  
  97H60; 
  15A18; 
  15A45 
\end{abstract}

\addtocounter{footnote}{1}
\footnotetext{Department of Mathematics \& Computer Science, Mount Allison University, Sackville, NB, Canada E4L 1E4}
\addtocounter{footnote}{1}
\footnotetext{Department of Mathematics \& Computer Science, Brandon University, Brandon, MB, Canada R7A 6A9}
\addtocounter{footnote}{1}
\footnotetext{Department of Politics \& International Relations, Mount Allison University, Sackville, NB, Canada E4L 1E4}

\section{Introduction}\label{sec:intro}

The Cauchy--Schwarz inequality states that $|\langle \v ,\w \rangle |\leq \|\v \|\|\w \|$ for all vectors $\v, \w\in \R^n$, with equality precisely when $\v$ and $\w$ are linearly dependent. In \cite{MO_Lemma,JM19}, the following inequality involving vectors $\v,\w \in \R^n$ was proved:
\begin{align}\label{eq:cs_original}
    \|\v^2\|\|\w^2\| - \ip{\v^2}{\w^2} \leq \|\v\|^2\|\w\|^2 - \ip{\v}{\w}^2,
\end{align}
where $\v^2, \w^2 \in \R^n$ are the vectors obtained by squaring each entry in $\v$ and $\w$, respectively, and $\ip{\v}{\w}$ is the usual inner (dot) product on $\R^n$. Inequality~\eqref{eq:cs_original} may be thought of as a strengthening of the Cauchy--Schwarz inequality: while the latter simply says that both sides are non-negative, the former also provides a nontrivial lower bound on the right-hand side.

Inequality~\eqref{eq:cs_original} was used by \cite{JM19} when studying \emph{pairwise completely positive} (PCP) matrices: pairs of matrices that share a joint decomposition so
that one of them is necessarily positive semidefinite while the other one is necessarily entrywise non-negative. Inequality~\eqref{eq:cs_original} is also of independent interest in the general study of inequalities, where various generalizations of the Cauchy--Schwarz inequality have been developed. One of the most famous generalizations is H{\"o}lder's inequality for integrals of measurable functions in L$^p$-spaces. Sums of products of powers of entries of the given vectors were considered in \cite{callebaut1965generalization}. Generalized Cauchy--Schwarz inequalities for unitarily invariant norms of operators were provided in \cite{bhatia1995cauchy}. Cauchy--Schwarz-like inequalities and historical notes are presented in \cite{labropoulou2024generalizations}. Entire textbooks are devoted to mathematical inequalities, including  \cite{steele2004cauchy}, which uses the Cauchy--Schwarz inequality as a guide through the study of the most fundamental mathematical inequalities. This is by no means an exhaustive list of work done to generalize or strengthen the Cauchy--Schwarz inequality, but it serves as useful context nonetheless.

While Inequality~\eqref{eq:cs_original} has indeed been proven, it remains somewhat mysterious mathematically---in particular, the presence of the vectors $\v^2$ and $\w^2$ seems, at first glance, rather unnatural. The goal of this paper is to provide alternative proofs of Inequality~\eqref{eq:cs_original} and subsequently generalize it in several different ways, thus shedding some light on ``why'' it is true. A more specific list of the main goals and results of this paper follows:

\begin{itemize}
    \item We present a geometric interpretation of Inequality~\eqref{eq:cs_original} and discuss how it can be proved by interpreting the quantities on both sides of the inequality as areas (see Section~\ref{sec:intuitive_geometric}).
    
    \item We present several results concerning what happens if we change the exponent $2$ in Inequality~\eqref{eq:cs_original} to another value $p$. In particular, we show that the inequality is false when $p \in (1,2)$ (see Example~\ref{exam:p12}) and is true when $p \geq 1$ is an integer (see Corollaries~\ref{cor:equal_tensors} and~\ref{cor:equal_tensors_from_lagrange}). Additionally, we conjecture that the inequality continues to hold even when $p \geq 2$ is not an integer (see Conjecture~\ref{conj:non_integer}).

    \item We present several other generalizations of Inequality~\eqref{eq:cs_original} to more than two vectors (see Theorem~\ref{thm:generalize_many_vecs}, Corollary~\ref{cor:fp_projection_diagonal}, and Corollary~\ref{cor:tripartite_cs}). Along the way, we present multiple results that we believe to be of independent interest, including a relationship between the vectors of diagonal entries and eigenvalues of a pair of matrices (see Corollary~\ref{cor:gen_to_matrices_eig}) and a sum-of-squares decomposition generalizing Lagrange's identity (see Theorem~\ref{thm:sumofsquares}).
\end{itemize}

We note that while there is a long history of trying to generalize and strengthen the Cauchy--Schwarz inequality, none of these existing generalizations lead to Inequality~\eqref{eq:cs_original} or any of the other results included in the present work.

\subsection{Notation and Mathematical Preliminaries}\label{sec:notation}

We use $\M_n(\F)$ to denote the set of $n \times n$ matrices with entries from the field $\F \in \{\R,\C\}$. The trace norm of a matrix $A \in \M_n(\F)$ is the sum of the singular values of $A$---that is, $\|A\|_{\tr} = \tr(\sqrt{A^*A})$, where $A^*$ denotes the conjugate transpose of the matrix $A$ (this is simply the usual transpose when $\F = \R$). The Frobenius norm of a matrix $A\in \M_n(\F)$ is defined by $\|A\|_{\textup{F}}=\sqrt{\sum_{i,j=1}^n|a_{i,j}|^2}=\sqrt{\tr(AA^*)}$, which also equals the $2$-norm of the vector of singular values of $A$. The Frobenius inner product of two matrices $A, B\in \M_n(\F)$ is similarly defined by ${\ip{A}{B}}_{\textup{F}} = \tr(A^*B)=\sum_{i,j}{\overline {a_{i,j}}}b_{i,j}$. 

Given a matrix $A\in \M_n(\F)$, its vectorization $\mathrm{vec}(A)$ is the $n^2$-dimensional vector whose entries are those of the first column of $A$, then of the second column of $A$, and so on:
\[
    \mathrm{vec}(A) = (a_{1,1}, \ldots, a_{n,1}, a_{1,2}, \ldots, a_{n,2}, \ldots, a_{1,n}, \ldots, a_{n,n}).
\]
The tensor (Kronecker) product of two matrices $A\in \M_m(\F)$ and $B\in \M_n(\F)$ is defined by
\[
    A\otimes B = \begin{bmatrix}a_{1,1}{B} &\cdots &a_{1,n}{B} \\\vdots &\ddots &\vdots \\a_{n,1}{B} &\cdots &a_{n,n}B \end{bmatrix}
\]
and the tensor (Kronecker) product of vectors $\x\in \F^m$ and $\y\in \F^n$ is defined similarly by
\begin{eqnarray*}
\x\otimes \y&=&(x_{1}\y, x_{2}\y, \ldots, x_m\y)\\
&=&(x_1y_1, \ldots, x_1y_n, x_2y_1, \ldots, x_2y_n, \ldots, x_my_1, \ldots, x_my_n).
\end{eqnarray*}
Whenever the shape of a vector is important (e.g., if a matrix multiplies it), we treat it as a column. The standard basis of $\R^n$ and $\C^n$ is $\{\mathbf{e}_1, \ldots, \mathbf{e}_n\}$, where $\mathbf{e}_i$ is the $n$-dimensional vector with $1$ in the $i$-th component and $0$ everywhere else.

\section{Intuitive/Geometric Proof}\label{sec:intuitive_geometric}

We now present an (at least partially) geometric argument for why Inequality~\eqref{eq:cs_original} holds that we believe is more enlightening than the proof originally given in \cite{JM19}. To this end, consider the pair of functions $f,g : \R^n \times \R^n \rightarrow \R$ defined by
\begin{align}\label{eq:cs_func}
    f(\v,\w) & = \|\v\|\|\w\| - \ip{\v}{\w} \quad \text{and} \quad g(\v,\w) = \sqrt{\|\v\|^2\|\w\|^2 - \ip{\v}{\w}^2}.
\end{align}
It is well known that the function $g$ is exactly the area of the parallelogram with sides $\v$ and $\w$. This is because $g$ is the Grammian of $\v$ and $\w$---that is, if $A$ is the $n \times 2$ matrix with $\v$ and $\w$ as its columns, then $g(\v,\w) = \sqrt{\det(A^*A)}$.

To provide a similar geometric interpretation of $f$, first notice that both $f$ and $g$ are unitarily invariant (i.e., $f(U\v,U\w) = f(\v,\w)$ and $g(U\v,U\w) = g(\v,\w)$ for all unitary matrices $U \in \M_n(\R)$), so they can be written as functions of $\|\v\|$, $\|\w\|$, and the angle $\theta$ between $\v$ and $\w$. In particular, the fact that $\cos(\theta) = \ip{\v}{\w}/(\|\v\|\|\w\|)$ quickly implies the following formulas:
\begin{align}\label{eq:cs_func_angle}
    f(\v,\w) & = \|\v\|\|\w\|(1 - \cos(\theta)) \quad \text{and} \quad g(\v,\w) = \|\v\|\|\w\|\sin(\theta).
\end{align}

Using the fact that $1-\cos(\theta) = 2\sin^2(\theta/2)$, we see that
\begin{align}\label{eq:f_g_relationship}
    f(\v,\w) = 2(g(\x,\y))^2,
\end{align}
where $\x$ and $\y$ are any two vectors with lengths $\sqrt{\|\v\|}$ and $\sqrt{\|\w\|}$, respectively, with an angle of $\theta/2$ between them (see Figure~\ref{fig:geometric_proof}). Since $g$ measures the area of a parallelogram and orthogonal projections cannot increase areas, it follows immediately that $g(P\v,P\w) \leq g(\v,\w)$ whenever $P \in \M_n(\R)$ is an orthogonal projection. By making use of Equation~\eqref{eq:f_g_relationship}, we find that the same is true of the function $f$:\footnote{We will provide an algebraic proof of this fact a bit later via Theorem~\ref{thm:fx_projection}.}

\begin{fact}\label{fac:f_projection}
    Suppose that $\v,\w \in \R^n$, and let $P \in \M_n(\R)$ be an orthogonal projection and $f$ be the function defined in Equation~\eqref{eq:cs_func}. Then
    \[
        f(P\v,P\w) \leq f(\v,\w).
    \]
\end{fact}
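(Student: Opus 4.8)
The plan is to bypass the area heuristic and give a short direct argument, because the relation $f = 2g^2$ from Equation~\eqref{eq:f_g_relationship} does not transfer cleanly under $P$: the map sending $(\v,\w)$ to the half-angle pair $(\x,\y)$ is nonlinear, so $P$ does not carry the half-angle configuration of $(\v,\w)$ to that of $(P\v,P\w)$, and ``projections do not increase area'' cannot be invoked verbatim. Instead, I would work directly with $f(\v,\w) = \norm{\v}\norm{\w} - \ip{\v}{\w}$ and split it using the complementary projection $Q = \I - P$, which is again an orthogonal projection.

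First I would rewrite the inner-product term. Since $P^* = P$ and $P^2 = P$, we have $\ip{P\v}{P\w} = \ip{\v}{P\w}$, and therefore $\ip{\v}{\w} - \ip{P\v}{P\w} = \ip{\v}{Q\w} = \ip{Q\v}{Q\w}$, where the last equality uses $Q^* = Q = Q^2$. Hence
\begin{align*}
    f(\v,\w) - f(P\v,P\w) = \big(\norm{\v}\norm{\w} - \norm{P\v}\norm{P\w}\big) - \ip{Q\v}{Q\w},
\end{align*}
so it suffices to prove $\norm{\v}\norm{\w} - \norm{P\v}\norm{P\w} \geq \ip{Q\v}{Q\w}$.

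Next I would apply the Cauchy--Schwarz inequality to the right-hand side, bounding $\ip{Q\v}{Q\w} \leq \norm{Q\v}\norm{Q\w}$, which reduces the goal to the purely scalar inequality $\norm{\v}\norm{\w} \geq \norm{P\v}\norm{P\w} + \norm{Q\v}\norm{Q\w}$. Because $P\v \perp Q\v$ and $P\w \perp Q\w$, the Pythagorean theorem gives $\norm{\v} = \sqrt{p^2 + q^2}$ and $\norm{\w} = \sqrt{r^2 + s^2}$, where $p = \norm{P\v}$, $q = \norm{Q\v}$, $r = \norm{P\w}$, and $s = \norm{Q\w}$. The remaining claim $\sqrt{(p^2+q^2)(r^2+s^2)} \geq pr + qs$ is just the Cauchy--Schwarz inequality applied to the two-dimensional vectors $(p,q)$ and $(r,s)$, which finishes the proof.

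The argument uses Cauchy--Schwarz twice, at two different scales, and I expect the only genuinely nonobvious step to be the first reduction: recognizing that the inner-product defect $\ip{\v}{\w} - \ip{P\v}{P\w}$ collapses to $\ip{Q\v}{Q\w}$, which is precisely what lets the norm defect $\norm{\v}\norm{\w} - \norm{P\v}\norm{P\w}$ dominate it. Everything else is forced. I would also note that the intermediate inequality $\norm{\v}\norm{\w} \geq \norm{P\v}\norm{P\w} + \norm{Q\v}\norm{Q\w}$ is slightly stronger than needed, and that this two-scale decomposition into the $P$ and $\I-P$ parts is presumably the mechanism that the deferred algebraic proof via Theorem~\ref{thm:fx_projection} generalizes.
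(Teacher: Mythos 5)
Your proof is correct, and it takes a genuinely different route from the paper. The paper itself proves Fact~\ref{fac:f_projection} only by the geometric argument (and your criticism of that argument is apt --- the half-angle configuration of $(P\v,P\w)$ is not the $P$-image of the half-angle configuration of $(\v,\w)$, which is presumably why the paper's footnote defers to an algebraic proof); the rigorous proof the paper eventually supplies is the $x=1$ case of Theorem~\ref{thm:fx_projection}. That proof first reduces, via unitary invariance and writing a projection as a product of projections, to the case where $P$ is diagonal of rank $n-1$, then uses the triangle inequality to bound $|\ip{\v}{\w}| - |\ip{P\v}{P\w}| \leq |v_1w_1|$, and finally applies Cauchy--Schwarz to the $2$-dimensional vectors $(v_1, \mathrm{sign}(v_1)\norm{P\v})$ and $(w_1, \mathrm{sign}(w_1)\norm{P\w})$. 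Your argument handles an arbitrary orthogonal projection in one shot by splitting against $Q = \I - P$: the exact identity $\ip{\v}{\w} - \ip{P\v}{P\w} = \ip{Q\v}{Q\w}$ replaces the paper's triangle-inequality step (in the paper's reduced setting $Q$ has rank one and your identity degenerates to exactly their $v_1w_1$ term), and your final $2$-dimensional Cauchy--Schwarz on $(\norm{P\v},\norm{Q\v})$ and $(\norm{P\w},\norm{Q\w})$ is the coordinate-free version of theirs, with no sign bookkeeping. What your route buys is the elimination of all reduction machinery (no unitary invariance, no rank induction) and a cleaner intermediate statement, $\norm{\v}\norm{\w} \geq \norm{P\v}\norm{P\w} + \norm{Q\v}\norm{Q\w}$, which is of independent interest; it also extends verbatim to the absolute-value version needed in Theorem~\ref{thm:fx_projection}, since $|\ip{\v}{\w}| - |\ip{P\v}{P\w}| \leq |\ip{Q\v}{Q\w}| \leq \norm{Q\v}\norm{Q\w}$, after which the $x > 1$ case follows from Lemma~\ref{lem:four_exponentials} exactly as in the paper. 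What the paper's formulation buys, by contrast, is that its reduction to diagonal projections is stated in the coordinates it later reuses for the tensor-product and Hadamard-product corollaries, but nothing in those applications actually requires that choice.
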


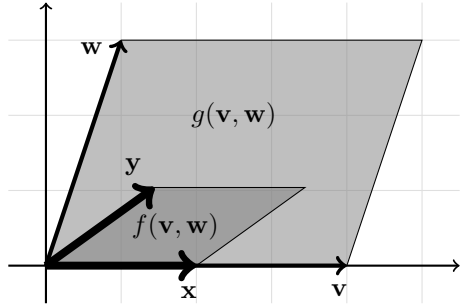
\begin{figure}[!htb]
    \centering
    \begin{tikzpicture}
    	\foreach \x in {0,1,2,3,4,5} \draw[color=gray!25] (\x,-0.5) -- (\x,3.5);
    	\foreach \y in {0,1,2,3} \draw[color=gray!25] (-0.5,\y) -- (5.5,\y);

    	\filldraw[fill=gray,fill opacity=0.5] (0,0) -- (4,0) -- (5,3) -- (1,3) -- cycle;
    	\filldraw[fill=gray,fill opacity=0.5] (0,0) -- (2,0) -- (3.442615,1.039778) -- (1.442615,1.039778) -- cycle;
    	\draw[ultra thick,-to] (0,0) -- (4,0)node[anchor=north,shift={(-0.1,-0.1)}]{$\v$};
    	\draw[ultra thick,-to] (0,0) -- (1,3)node[anchor=east,shift={(-0.1,-0.1)}]{$\w$};
    	\draw[line width=3pt,-to] (0,0) -- (1.442615,1.039778)node[anchor=south east]{$\y$};
    	\draw[line width=3pt,-to] (0,0) -- (2,0)node[anchor=north,shift={(-0.1,-0.1)}]{$\x$};
    	\draw (2.5,2)node{$g(\v,\w)$};
    	\draw (1.7213075,0.519889)node{$f(\v,\w)$};

    	\draw[thick,-to] (-0.5,0) -- (5.5,0);
    	\draw[thick,-to] (0,-0.5) -- (0,3.5);
	\end{tikzpicture}

	\caption{An illustration of how the functions $f$ and $g$ from Equation~\eqref{eq:cs_func} may be interpreted as areas of parallelograms. The angle between $\x$ and $\y$ is half of the angle between $\v$ and $\w$, and the lengths of $\x$ and $\y$ are the square roots of the lengths of $\v$ and $\w$, respectively.}\label{fig:geometric_proof}
\end{figure}

For example, if we choose $P = O$ (the all-zeros matrix) in Fact~\ref{fac:f_projection}, then we recover exactly the Cauchy--Schwarz inequality itself, since the left-hand side of the equation is simply the scalar zero. With another choice of $P$, we obtain the following result:

\begin{theorem}\label{thm:gen_to_matrices}
    Suppose that $X,Y \in \M_n(\R)$, and let $\x = \mathrm{diag}(X)$ and $\y = \mathrm{diag}(Y)$. Then
    \begin{align}\label{eq:gen_to_matrices}
        \|\x\|\|\y\| - \ip{\x}{\y} \leq \|X\|_{\textup{F}}\|Y\|_{\textup{F}} - \ip{X}{Y}_{\textup{F}}.
    \end{align}
\end{theorem}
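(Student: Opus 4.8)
The plan is to obtain this directly from Fact~\ref{fac:f_projection} by passing to vectorizations and choosing $P$ to be the orthogonal projection that extracts the diagonal entries. First I would set $\v = \mathrm{vec}(X)$ and $\w = \mathrm{vec}(Y)$, both regarded as vectors in $\R^{n^2}$. Because vectorization is an isometry identifying $(\M_n(\R), \|\cdot\|_{\textup{F}}, \ip{\cdot}{\cdot}_{\textup{F}})$ with $(\R^{n^2}, \|\cdot\|, \ip{\cdot}{\cdot})$---indeed $\|X\|_{\textup{F}} = \|\v\|$ and $\ip{X}{Y}_{\textup{F}} = \ip{\v}{\w}$---the right-hand side of Inequality~\eqref{eq:gen_to_matrices} is exactly $f(\v,\w)$, with $f$ as defined in Equation~\eqref{eq:cs_func}.

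Next I would locate the diagonal entries of $X$ inside $\v$. Since $\mathrm{vec}$ stacks the columns of $X$ in order, the entry $x_{i,i}$ occupies position $(i-1)n + i$ of $\v$. I would then let $P \in \M_{n^2}(\R)$ be the diagonal $0/1$ matrix whose diagonal entries equal $1$ precisely in the $n$ positions $(i-1)n + i$ for $i = 1, \ldots, n$, and $0$ elsewhere. Such a coordinate selector is symmetric and idempotent, hence an orthogonal projection. Applying it, $P\v$ retains exactly the diagonal entries of $X$ and zeros out everything else, so $\|P\v\| = \|\x\|$ and $\ip{P\v}{P\w} = \ip{\x}{\y}$; consequently $f(P\v,P\w)$ equals the left-hand side of Inequality~\eqref{eq:gen_to_matrices}.

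Finally, I would invoke Fact~\ref{fac:f_projection} in dimension $n^2$ to conclude that $f(P\v,P\w) \leq f(\v,\w)$, which is precisely the desired inequality. The argument is essentially a dictionary translation through this isometry, so the only point requiring genuine care is the bookkeeping of the index positions of the diagonal entries in the vectorization (and the verification that the resulting coordinate projection really is orthogonal). I expect this indexing to be the main---albeit minor---obstacle, while the inequality itself is immediate once the correct projection is identified.
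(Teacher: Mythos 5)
Your proof is correct and is essentially identical to the paper's own argument: both vectorize $X$ and $Y$, identify the Frobenius norm and inner product with the Euclidean ones, and apply Fact~\ref{fac:f_projection} with the coordinate projection onto the diagonal positions (your $0/1$ selector matrix is exactly the paper's $P = \sum_{i=1}^n \mathbf{e}_i\mathbf{e}_i^T \otimes \mathbf{e}_i\mathbf{e}_i^T$). If anything, your bookkeeping of the positions $(i-1)n+i$ and the observation that $\|P\v\| = \|\x\|$, $\ip{P\v}{P\w} = \ip{\x}{\y}$ is slightly more careful than the paper's shorthand claim that $P\v = \x$.
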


\begin{proof}
    If we define $\v = \mathrm{vec}(X)$ and $\w = \mathrm{vec}(Y)$ and use the fact that vectorization is an isomorphism from the inner product space $\M_n(\R)$ to the inner product space $\R^n$ (so, for example, we have $\|\v\| = \|X\|_{\textup{F}}$ and $\ip{\v}{\w} = \ip{X}{Y}_{\textup{F}}$), then Fact~\ref{fac:f_projection} tells us that
    \[
        \|P\v\|\|P\w\| - \ip{P\v}{P\w} \leq \|\v\|\|\w\| - \ip{\v}{\w} = \|X\|_{\textup{F}}\|Y\|_{\textup{F}} - \ip{X}{Y}_{\textup{F}}.
    \]
    Now just choose $P$ to be the orthogonal projection onto the span of the diagonal entries of $\v$ and $\w$ (specifically, $P = \mathrm{vec}(I) = \sum_{i=1}^n \mathbf{e}_i\mathbf{e}_i^T \otimes \mathbf{e}_i\mathbf{e}_i^T$), so that $P\v = \x$ and $P\w = \y$.
\end{proof}

To obtain Inequality~\eqref{eq:cs_original} from Theorem~\ref{thm:gen_to_matrices}, we simply choose $X = \v\v^T$ and $Y = \w\w^T$, since then we have (in the notation of that theorem) $\x = \v^2$, $\y = \w^2$, and
\begin{align*}
    \|\v^2\|\|\w^2\| - \ip{\v^2}{\w^2} \leq \|\v\v^T\|_{\textup{F}}\|\w\w^T\|_{\textup{F}} - \ip{\v\v^T}{\w\w^T}_{\textup{F}} = \|\v\|^2\|\w\|^2 - \ip{\v}{\w}^2,
\end{align*}
thus completing the proof.

Another particularly interesting usage of Theorem~\ref{thm:gen_to_matrices} is that it lets us bound the eigenvalues of a pair of symmetric matrices in terms of its diagonal entries. Although the well-known Schur--Horn theorem (see \cite[Theorems~B.1 and~B.2]{marshall11}, for example) already provides such a bound for a single symmetric matrix (and indeed, a complete characterization of which eigenvalues of a symmetric matrix are possible given its diagonal entries), the following corollary seems to be new:

\begin{corollary}\label{cor:gen_to_matrices_eig}
    Suppose that $X,Y \in \M_n(\R)$ are symmetric. Let $\x = \mathrm{diag}(X)$ and $\y = \mathrm{diag}(Y)$, and let $\lambda_X$ and $\lambda_Y$ be vectors of the eigenvalues of $X$ and $Y$, respectively, sorted in opposite order (i.e., one sorted from largest to smallest and the other sorted from smallest to largest). Then
    \begin{align}\label{eq:gen_to_matrices}
        \|\x\|\|\y\| - \ip{\x}{\y} \leq \|\lambda_X\|\|\lambda_Y\| - \ip{\lambda_X}{\lambda_Y}.
    \end{align}
\end{corollary}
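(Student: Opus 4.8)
The plan is to feed Theorem~\ref{thm:gen_to_matrices} into a standard trace inequality for symmetric matrices. Applying that theorem directly gives
\[
    \|\x\|\|\y\| - \ip{\x}{\y} \leq \|X\|_{\textup{F}}\|Y\|_{\textup{F}} - \ip{X}{Y}_{\textup{F}},
\]
so it suffices to show that the right-hand side here is bounded above by $\|\lambda_X\|\|\lambda_Y\| - \ip{\lambda_X}{\lambda_Y}$.

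First I would handle the norm terms. Since $X$ is symmetric, its singular values are the absolute values of its eigenvalues, and the Frobenius norm equals the $2$-norm of the singular values; as the $2$-norm is insensitive to both signs and reordering, we get $\|X\|_{\textup{F}} = \|\lambda_X\|$ and likewise $\|Y\|_{\textup{F}} = \|\lambda_Y\|$. The norm products on the two sides therefore already agree, and the whole problem collapses to the inner-product terms: I must show that $\ip{\lambda_X}{\lambda_Y} \leq \ip{X}{Y}_{\textup{F}} = \tr(XY)$.

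This last inequality is the crux. It is exactly the symmetric-matrix trace inequality asserting that $\tr(XY)$, among all matrices with the prescribed spectra, is minimized precisely when the eigenvalues are paired in opposite order---which is how $\lambda_X$ and $\lambda_Y$ are defined. I would prove it by spectral decomposition: write $X = U\Lambda_X U^T$ and $Y = V\Lambda_Y V^T$ with the diagonal eigenvalue matrices sorted in opposite orders. Setting $W = U^T V$ (which is orthogonal), a direct computation gives $\tr(XY) = \sum_{i,j}(\lambda_X)_i (\lambda_Y)_j W_{i,j}^2$, and the matrix $S$ with entries $S_{i,j} = W_{i,j}^2$ is doubly stochastic by orthogonality of $W$. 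Hence $\tr(XY)$ is a convex combination over the Birkhoff polytope of the permutation values $\sum_i (\lambda_X)_i (\lambda_Y)_{\pi(i)}$; by the rearrangement inequality the smallest of these is attained when the two eigenvalue vectors are oppositely sorted, which is exactly $\ip{\lambda_X}{\lambda_Y}$.

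The main obstacle is this trace inequality, and in particular the reduction to permutations via the Birkhoff--von Neumann theorem together with the rearrangement inequality (the identification of $S = (W_{i,j}^2)$ as doubly stochastic being immediate). Once it is in hand, chaining the norm identities with the inner-product bound yields
\[
    \|\x\|\|\y\| - \ip{\x}{\y} \leq \|\lambda_X\|\|\lambda_Y\| - \tr(XY) \leq \|\lambda_X\|\|\lambda_Y\| - \ip{\lambda_X}{\lambda_Y},
\]
completing the proof. As this trace inequality is classical, I would likely cite it rather than reprove it in detail.
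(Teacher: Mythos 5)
Your proposal is correct and follows essentially the same route as the paper: apply Theorem~\ref{thm:gen_to_matrices}, replace the Frobenius norms by $\|\lambda_X\|$ and $\|\lambda_Y\|$ using symmetry, and finish with the classical trace inequality $\ip{X}{Y}_{\textup{F}} = \tr(XY) \geq \ip{\lambda_X}{\lambda_Y}$ for oppositely sorted spectra, which the paper simply cites (as you also say you would do). Your spectral-decomposition sketch of that inequality---writing $\tr(XY) = \sum_{i,j}(\lambda_X)_i(\lambda_Y)_j W_{i,j}^2$ with $(W_{i,j}^2)$ doubly stochastic and invoking Birkhoff--von Neumann plus the rearrangement inequality---is a correct rendition of the standard proof of that cited fact.
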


\begin{proof}
    Simply recall that for any symmetric matrices $X$ and $Y$, we have $\|X\|_{\textup{F}} = \|\lambda_X\|$ and $\|Y\|_{\textup{F}} = \|\lambda_Y\|$. Theorem~\ref{thm:gen_to_matrices} then tells us that
    \[
        \|\x\|\|\y\| - \ip{\x}{\y} \leq \|\lambda_X\|\|\lambda_Y\| - \ip{X}{Y}_{\textup{F}}.
    \]
    All that remains is to show that $\ip{X}{Y}_{\textup{F}} \geq \ip{\lambda_X}{\lambda_Y}$. This is a well-known inequality in matrix analysis (see \cite[Problem~III.6.14]{Bha97}, for example), so the proof is complete.
\end{proof}

In order to generalize the above corollary to make use of singular values of non-symmetric matrices, we have to move terms around a little bit:

\begin{corollary}\label{cor:gen_to_matrices_svd}
    Suppose that $X,Y \in \M_n(\R)$, let $\x = \mathrm{diag}(X)$ and $\y = \mathrm{diag}(Y)$, and let $\sigma_X$ and $\sigma_Y$ be vectors of the singular values of $X$ and $Y$, respectively, sorted in the same order (e.g., both sorted from largest to smallest). Then
    \begin{align}\label{eq:gen_to_matrices}
        \|\x\|\|\y\| - \ip{\sigma_X}{\sigma_Y} \leq \|\sigma_X\|\|\sigma_Y\| - \ip{\x}{\y}.
    \end{align}
\end{corollary}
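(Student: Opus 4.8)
The plan is to deduce this from Theorem~\ref{thm:gen_to_matrices} together with a single classical singular-value estimate, carrying out exactly the ``moving terms around'' promised in the preamble. First I would rewrite the target inequality in a more symmetric form. Bringing the two inner-product terms to the opposite sides and using the identity $\|X\|_{\textup{F}} = \|\sigma_X\|$ (the Frobenius norm is the $2$-norm of the vector of singular values), the claim
\[
    \|\x\|\|\y\| - \ip{\sigma_X}{\sigma_Y} \leq \|\sigma_X\|\|\sigma_Y\| - \ip{\x}{\y}
\]
is seen to be equivalent to
\[
    \|\x\|\|\y\| + \ip{\x}{\y} \leq \|\sigma_X\|\|\sigma_Y\| + \ip{\sigma_X}{\sigma_Y}.
\]

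Next I would produce the left-hand side of this reformulation from Theorem~\ref{thm:gen_to_matrices}, but applied to the pair $X, -Y$ rather than to $X, Y$. The point is that negating $Y$ flips the sign of its diagonal vector and of the Frobenius inner product, while leaving the Frobenius norm (equivalently, the singular values) unchanged: $\mathrm{diag}(-Y) = -\y$, $\ip{X}{-Y}_{\textup{F}} = -\ip{X}{Y}_{\textup{F}}$, and $\|{-Y}\|_{\textup{F}} = \|\sigma_Y\|$. Feeding $X$ and $-Y$ into Theorem~\ref{thm:gen_to_matrices} therefore yields precisely
\[
    \|\x\|\|\y\| + \ip{\x}{\y} \leq \|\sigma_X\|\|\sigma_Y\| + \ip{X}{Y}_{\textup{F}}.
\]

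Finally I would bound $\ip{X}{Y}_{\textup{F}}$ from above by $\ip{\sigma_X}{\sigma_Y}$, which is where the ``sorted in the same order'' hypothesis enters. This is exactly von Neumann's trace inequality: writing $\ip{X}{Y}_{\textup{F}} = \tr(X^T Y)$ and noting that $X^T$ has the same singular values as $X$, we have $\ip{X}{Y}_{\textup{F}} \leq |\tr(X^T Y)| \leq \sum_i \sigma_i(X)\sigma_i(Y) = \ip{\sigma_X}{\sigma_Y}$ when both lists of singular values are arranged in decreasing order. Chaining this bound with the previous display gives the reformulated inequality, and hence the corollary.

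I expect the only real subtlety to be the sign bookkeeping in the second step: one must apply Theorem~\ref{thm:gen_to_matrices} to $-Y$ so that the Frobenius inner product surfaces with a plus sign, which is the direction in which von Neumann's inequality can be applied. Applying the theorem directly to $X, Y$ would instead leave $-\ip{X}{Y}_{\textup{F}}$ on the right, and von Neumann's bound would then point the wrong way; this is precisely the mirror image of the opposite-ordering trick used for eigenvalues in Corollary~\ref{cor:gen_to_matrices_eig}.
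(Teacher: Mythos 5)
Your proposal is correct and follows essentially the same route as the paper's proof: apply Theorem~\ref{thm:gen_to_matrices} to a sign-flipped pair (the paper uses $-X$ and $Y$; your choice of $X$ and $-Y$ is an immaterial symmetry yielding the identical inequality), then invoke von Neumann's trace inequality $\ip{X}{Y}_{\textup{F}} \leq \ip{\sigma_X}{\sigma_Y}$ and rearrange. Your sign bookkeeping and the reduction to the symmetric form of the inequality are both accurate.
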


\begin{proof}
    Recall that the Frobenius norm has the property that $\|X\|_{\textup{F}} = \|\sigma_X\|$ and $\|Y\|_{\textup{F}} = \|\sigma_Y\|$, so Theorem~\ref{thm:gen_to_matrices}, when applied to the matrices $-X$ and $Y$ instead of $X$ and $Y$, tells us that
    \[
        \|\x\|\|\y\| + \ip{\x}{\y} \leq \|\sigma_X\|\|\sigma_Y\| + \ip{X}{Y}_{\textup{F}}.
    \]
    Now use the fact that $\ip{X}{Y}_{\textup{F}} \leq \ip{\sigma_X}{\sigma_Y}$ (this is sometimes called von Neumann's trace inequality \cite[Theorem~7.4.11]{HJ13}), and rearrange slightly to obtain the desired inequality.
\end{proof}

\section{Tensor Inequalities and Generalizations to Higher Exponents}\label{sec:higher_exponents}

Given a real number $x \geq 1$, consider the function $f : \R^n \times \R^n \rightarrow \R$ defined by
\begin{align}\label{eq:cs_func_x}
    f_x(\v,\w) = \|\v\|^x\|\w\|^x - |\ip{\v}{\w}|^x.
\end{align}
When $x = 1$, the function $f_x$ from Equation~\eqref{eq:cs_func_x} becomes essentially the function $f$ from Equation~\eqref{eq:cs_func} (just with an absolute value around the inner product, which does not change anything substantial, since we can always replace $\w$ by $-\w$ in the original $f$ to force the inner product term to be negative as in $f_x$). When $x = 2$, the function $f_x$ is the square of the Grammian of $\v$ and $\w$---that is, if $A$ is the $n \times 2$ matrix with $\v$ and $\w$ as its columns, then
\[
    f_2(\v,\w) = \|\v\|^2\|\w\|^2 - |\ip{\v}{\w}|^2 = \det(A^TA)
\]
is the square of the area of the parallelogram with sides $\v$ and $\w$ (and is thus equal to the square of the function $g$ from Section~\ref{sec:intuitive_geometric}).

The original Inequality~\eqref{eq:cs_original} can be written in terms of these $f_x$ functions as $f_1(\v^2,\w^2) \leq f_2(\v,\w)$. Our goal in this section is to generalize this inequality to other values of $x$. To this end, first note that since the $2$-norm and associated inner product are unitarily invariant, so is $f_x$. Similarly, we showed in Fact~\ref{fac:f_projection} that $f_1$ is non-increasing under orthogonal projections, and the same is well known to hold for the Grammian and thus $f_2$. We now show that the same holds for each $f_x$, starting with the following lemma:

\begin{lemma}\label{lem:four_exponentials}
    Suppose that $a,b,c,d \geq 0$ are constants with $\max\{a,b\} \geq \max\{c,d\}$ and $a+b \geq c+d$. Let
    \[
        f(x) = a^x + b^x - c^x - d^x.
    \]
    Then $f(x) \geq 0$ for all $x \geq 1$.
\end{lemma}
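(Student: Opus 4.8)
The plan is to first normalize the problem by relabeling. Since $f$ is symmetric under the swaps $a \leftrightarrow b$ and $c \leftrightarrow d$, I may assume without loss of generality that $a \geq b$ and $c \geq d$. Under this convention the hypotheses read simply $a \geq c$ (the max condition) and $a + b \geq c + d$ (the sum condition), and the goal becomes $a^x + b^x \geq c^x + d^x$ for all $x \geq 1$.

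Next I would dispose of the easy cases. When $x = 1$ the inequality is exactly the sum hypothesis $a + b \geq c + d$. And whenever $b \geq d$, the two hypotheses combine trivially: $a \geq c \geq 0$ and $b \geq d \geq 0$ force $a^x \geq c^x$ and $b^x \geq d^x$ term by term, since $t \mapsto t^x$ is non-decreasing on $[0,\infty)$ for $x \geq 1$. Hence $f(x) \geq 0$, and the only genuine remaining case is $x > 1$ together with $b < d$.

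The heart of the argument is this remaining case, and it is where both hypotheses must be used at once (a purely termwise comparison cannot work because $b < d$). Rewrite the target as $a^x - c^x \geq d^x - b^x$. From $b < d$ and the sum hypothesis one first deduces $a > c$ (were $a = c$, the sum hypothesis would give $b \geq d$, a contradiction), so both differences are nonnegative and both intervals below are nondegenerate. Applying the mean value theorem to $t \mapsto t^x$ on $[c,a]$ and on $[b,d]$ yields points $\xi \in (c,a)$ and $\eta \in (b,d)$ with
\[
    a^x - c^x = x\xi^{x-1}(a-c) \quad \text{and} \quad d^x - b^x = x\eta^{x-1}(d-b).
\]
The key observation is that the sorting separates the two intervals: $\xi > c \geq d > \eta$, so $\xi^{x-1} \geq \eta^{x-1} \geq 0$ because $x - 1 > 0$. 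Combining this with the sum hypothesis rearranged as $a - c \geq d - b \geq 0$ gives $x\xi^{x-1}(a-c) \geq x\eta^{x-1}(d-b)$, which is precisely $a^x - c^x \geq d^x - b^x$.

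I expect the main obstacle to be recognizing that the two hypotheses cannot be decoupled, and then locating the right comparison of derivatives; the clean inequality $\xi \geq \eta$ rests on the sorted-order fact $c \geq d$, which is easy to overlook. I would also remark that the lemma is an instance of a standard principle: the sorted hypotheses say exactly that $(c,d)$ is weakly majorized by $(a,b)$, and $t \mapsto t^x$ is increasing and convex for $x \geq 1$, so the conclusion also follows from the Hardy--Littlewood--P\'olya theory that increasing convex functions preserve weak majorization. The elementary argument above is simply a self-contained proof of this special two-variable case.
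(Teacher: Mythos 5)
Your proof is correct, and it takes a genuinely different route from the paper's. The paper first scales so that $a+b=1$, then argues that $c$ and $d$ may be pushed up until $c+d=1$ as well, and finally reduces everything to the monotonicity of the single-variable function $g(t) = t^x + (1-t)^x$ on $[1/2,1]$, concluding from $a \geq c \geq 1/2$ that $g(a) \geq g(c)$. You instead keep all four parameters, split off the trivial cases ($x=1$, or $b \geq d$ where the comparison is termwise), and in the remaining case $b < d$ compare the increments $a^x - c^x$ and $d^x - b^x$ via the mean value theorem, using the interval separation $\xi > c \geq d > \eta$ together with $a - c \geq d - b \geq 0$. Each approach has its merits: the paper's is a compact one-variable calculus argument, but its normalization step (raising $c$ and/or $d$ until $c+d=1$) implicitly requires checking that this can be done without violating $\max\{c,d\} \leq a$, a point the paper glosses over and your argument never needs, since you use both hypotheses exactly where they enter. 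Your closing remark is also apt: with the sorted convention, the hypotheses say precisely that $(c,d)$ is weakly majorized by $(a,b)$, so the lemma is the two-variable case of the Tomi\'c--Weyl/Hardy--Littlewood--P\'olya principle that increasing convex functions (here $t \mapsto t^x$ for $x \geq 1$) preserve weak majorization; this viewpoint extends immediately to more variables, which the paper's one-variable reduction does not.
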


\begin{proof}
    Without loss of generality, assume that $a\geq b$, $c\geq d$. By scaling $f$, we may assume further that $a+b = 1$ (and therefore $a\geq 1/2$). Moreover, decreasing $c$ and/or $d$ increases the value of $f(x)$, so it suffices to prove the lemma when $c+d = 1$ as well.

    We therefore have $b = 1-a$ and $d = 1-c$, so our goal is to show that
    \begin{align}\label{ineq:ac}
        a^x + (1-a)^x \geq c^x + (1-c)^x \quad \text{for all} \quad x \geq 1.
    \end{align}
    Now consider the function
    \[
        g(a) = a^x + (1-a)^x.
    \]
    Taking the derivative of $g$ with respect to $a$ reveals that
    \[
        g^\prime(a) = x\big(a^{x-1} - (1-a)^{x-1}\big),
    \]
    which is non-negative, since $a \geq 1-a \geq 0$ and $x \geq 1$. It follows that $g$ is monotonically non-decreasing for $a \in [1/2, 1]$. Now observe that $a \geq b$ and $\max\{a,b\} \geq \max\{c,d\}$ imply $a \ge c$, from which it immediately follows that $g(a) \geq g(c)$. This is precisely Inequality~\eqref{ineq:ac}, so we are done.
\end{proof}

\begin{theorem}\label{thm:fx_projection}
    Let $x \geq 1$ be a real number. For any pair of vectors $\v,\w \in \R^n$ and any orthogonal projection $P \in \M_n(\R)$, we have
    \[
        f_x(P\v,P\w) \leq f_x(\v,\w).
    \]
\end{theorem}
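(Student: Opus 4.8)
The plan is to reduce the claim to the four-variable Lemma~\ref{lem:four_exponentials} that was just proved. I would first exploit unitary invariance to put the pair $(\v,\w)$ and the pair $(P\v,P\w)$ into a convenient low-dimensional form. Since $f_x$ depends only on $\|\v\|$, $\|\w\|$, and the angle $\theta$ between $\v$ and $\w$, and since applying an orthogonal projection $P$ only shrinks the two vectors while changing their relative angle, the whole inequality really lives in the span of $\v$ and $\w$, which is at most two-dimensional. So I would pick an orthonormal basis in which $\v = (r,0)^T$ and $\w = (s\cos\theta, s\sin\theta)^T$ with $r = \|\v\|$, $s = \|\w\|$, and then analyze what a general orthogonal projection does to this configuration.

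Next I would rewrite both sides in terms of the relevant norms and inner products. The key algebraic observation is that $f_x$ has the homogeneous form $(\text{product of norms})^x - |\text{inner product}|^x$, and I want to match this against the template $a^x + b^x - c^x - d^x \ge 0$ from the lemma. Concretely, I expect to set $a = \|\v\|\|\w\|$, $b = |\ip{P\v}{P\w}|$, $c = \|P\v\|\|P\w\|$, $d = |\ip{\v}{\w}|$, so that $f_x(\v,\w) - f_x(P\v,P\w) = a^x + b^x - c^x - d^x$. Then the entire theorem follows from Lemma~\ref{lem:four_exponentials} provided I can verify its two hypotheses: $\max\{a,b\} \ge \max\{c,d\}$ and $a + b \ge c + d$.

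Verifying those two inequalities is where the real work lies, and I expect it to be the main obstacle. The first hypothesis should be relatively painless: because orthogonal projections are contractions, $\|P\v\|\le\|\v\|$ and $\|P\w\|\le\|\w\|$, giving $c = \|P\v\|\|P\w\| \le \|\v\|\|\w\| = a$, and since also $a \ge b$ and $a \ge d$ (the latter by Cauchy--Schwarz), we get $\max\{a,b\} = a \ge \max\{c,d\}$. The genuinely delicate inequality is $a + b \ge c + d$, i.e.
\[
    \|\v\|\|\w\| + |\ip{P\v}{P\w}| \ge \|P\v\|\|P\w\| + |\ip{\v}{\w}|.
\]
This is exactly the $x = 1$ case, which is Fact~\ref{fac:f_projection} (proved geometrically in Section~\ref{sec:intuitive_geometric}), so I can simply invoke it: $f_1(P\v,P\w) \le f_1(\v,\w)$ rearranges precisely to $a + b \ge c + d$.

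Thus the strategy is a clean bootstrapping argument: the hard geometric content sits entirely in the $x=1$ case, already established as Fact~\ref{fac:f_projection}, and Lemma~\ref{lem:four_exponentials} then propagates monotonicity-under-projection from the exponent $1$ up to every exponent $x \ge 1$. The one subtlety I would watch carefully is the sign convention on the inner-product terms: the lemma needs all four quantities non-negative, which is why I take absolute values of the inner products, matching the definition of $f_x$ in Equation~\eqref{eq:cs_func_x}. As long as the assignment of $a,b,c,d$ is chosen so that both the identity $f_x(\v,\w) - f_x(P\v,P\w) = a^x+b^x-c^x-d^x$ and the two hypotheses hold simultaneously, the proof closes immediately.
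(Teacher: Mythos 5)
Your bootstrap from $x=1$ to general $x$ is exactly the paper's: the same assignment $a = \|\v\|\|\w\|$, $b = |\ip{P\v}{P\w}|$, $c = \|P\v\|\|P\w\|$, $d = |\ip{\v}{\w}|$, the same routine check that $\max\{a,b\} = a \geq \max\{c,d\}$, and the same appeal to Lemma~\ref{lem:four_exponentials} once $a+b \geq c+d$ is known. The gap is in how you discharge $a + b \geq c + d$, which is the $x=1$ case and the entire substance of the theorem. You cite Fact~\ref{fac:f_projection}, but the only justification given for that Fact is the geometric sketch of Section~\ref{sec:intuitive_geometric}, and that sketch is not a proof: Equation~\eqref{eq:f_g_relationship} relates $f(\v,\w)$ to the area $g(\x,\y)$ of a \emph{different} pair of vectors (square-rooted lengths, half the angle), and this transformation does not commute with applying $P$, so the statement that projections shrink areas says nothing directly about $f$. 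Indeed, the two monotonicities that the geometric discussion does establish --- that $g$ and the product of norms both decrease under $P$ --- cannot imply the Fact on their own: the data ``norms $1,1$, angle $0$'' versus ``norms $\tfrac12,\tfrac12$, angle $\pi$'' respects both, yet $f$ increases from $0$ to $\tfrac12$ (no projection realizes that transition, which is precisely the extra information one must use). This is why the footnote to Fact~\ref{fac:f_projection} promises that the rigorous, algebraic proof of the Fact will be supplied \emph{by} Theorem~\ref{thm:fx_projection}; citing the Fact inside the theorem's proof inverts that intended logical order and, in effect, assumes the hard part.

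What your proposal is missing, then, is an actual proof of $\|P\v\|\|P\w\| - |\ip{P\v}{P\w}| \leq \|\v\|\|\w\| - |\ip{\v}{\w}|$. The paper's argument: by unitary invariance one may take $P$ diagonal, and since any diagonal projection is a product of corank-one diagonal projections, one may assume $P$ simply deletes the first coordinate; then the triangle inequality gives $|\ip{\v}{\w}| - |\ip{P\v}{P\w}| \leq |v_1w_1|$, while Cauchy--Schwarz applied to the $2$-dimensional vectors $(v_1, \mathrm{sign}(v_1)\|P\v\|)$ and $(w_1, \mathrm{sign}(w_1)\|P\w\|)$ gives $|v_1w_1| \leq \|\v\|\|\w\| - \|P\v\|\|P\w\|$; adding the two inequalities finishes the case. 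Two minor remarks: your opening reduction to the span of $\v$ and $\w$ is incorrect as stated ($P\v$ and $P\w$ need not lie in that span, and $P$ restricted to that plane need not act as a projection of the plane), though nothing you do later depends on it; and your sign handling would be fine, since $f_1(\v,\w) = \min_{\epsilon = \pm 1} f(\v,\epsilon\w)$ and monotonicity of $f$ applied to each pair $(\v,\epsilon\w)$ dominates both terms of the minimum --- but that conversion only helps once the $x=1$ inequality is genuinely proved.
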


\begin{proof}
    Since $f_x$ is unitarily invariant, we may assume without loss of generality that $P$ is diagonal. We may assume further that the range of $P$ is $(n-1)$-dimensional, since every diagonal projection onto a smaller space can be written as a product of diagonal projections onto $(n-1)$-dimensional spaces. Altogether, this lets us assume without loss of generality that $P\v = (v_2, v_3, \ldots, v_{n})$ and $P\w = (w_2, w_3, \ldots, w_n)$.

    We first prove the theorem in the $x = 1$ case, which boils down to
    \begin{align}\label{eq:ineq_x1}
        \|P\v\|\|P\w\| - |\ip{P\v}{P\w}| \leq \|\v\|\|\w\| - |\ip{\v}{\w}|.
    \end{align}
    To prove this inequality, start by noting that the triangle inequality yields
    \begin{align*}
        |\ip{\v}{\w}| & = \left|\sum_{i=1}^n v_iw_i\right| \leq \left|\sum_{i=2}^n v_iw_i\right| + |v_1w_1|,
    \end{align*}
    which we can rearrange slightly to obtain
    \begin{align}\label{eq:ineq_x1_stitch1}
        |\ip{\v}{\w}| - |\ip{P\v}{P\w}| & = \left|\sum_{i=1}^n v_iw_i\right| - \left|\sum_{i=2}^n v_iw_i\right| \leq |v_1w_1|.
    \end{align}
    From here, we apply the Cauchy--Schwarz inequality to the 2-dimensional vectors $(v_1, \mathrm{sign}(v_1)\|P\v\|)$ and $(w_1, \mathrm{sign}(w_1)\|P\w\|)$, where $\mathrm{sign}(z) = z/|z|$ is the sign of $z$, to see that
    \begin{eqnarray*}
        |v_1w_1| + \|P\v\|\|P\w\| &\leq& \sqrt{v_1^2 + \|P\v\|^2} \sqrt{w_1^2 + \|P\w\|^2} \\
        &=& \sqrt{v_1^2 + v_2^2 +\cdots+ v_n^2} \sqrt{w_1^2 + w_2^2 +\cdots + w_n^2} \\
        &=& \|\v\|\|\w\|.
    \end{eqnarray*}
    Rearranging slightly then gives us
    \begin{align}\label{eq:ineq_x1_stitch2}
        |v_1w_1| \leq \|\v\|\|\w\| - \|P\v\|\|P\w\|.
    \end{align}
    Stitching together Inequalities~\eqref{eq:ineq_x1_stitch1} and~\eqref{eq:ineq_x1_stitch2} gives exactly Inequality~\eqref{eq:ineq_x1} as desired, thus completing the proof of the theorem in the $x = 1$ case.

    To establish that the theorem holds for $x > 1$ as well, notice how it follows from the $x = 1$ case that when $a = \|\v\|\|\w\|$, $b = |\ip{P\v}{P\w}|$, $c = \|P\v\|\|P\w\|$, and $d = |\ip{\v}{\w}|$, we have $a+b \geq c+d$. We can thus apply Lemma~\ref{lem:four_exponentials} (the inequality $a \geq \max\{c,d\}$ is straightforward) to obtain $a^x + b^x \geq c^x + d^x$ for all $x \geq 1$ as well. Plugging in $a,b,c,d$ and rearranging then gives exactly the statement of the theorem.
\end{proof}

Much like we used Fact~\ref{fac:f_projection} to prove Theorem~\ref{thm:gen_to_matrices}, we can now use Theorem~\ref{thm:fx_projection} to prove another generalization of Inequality~\eqref{eq:cs_original}. First, however, we need to introduce a bit more notation and terminology. For any pair of vectors $\x, \y\in \R^n$, define $\x\odot \y$ to be the Hadamard (entrywise) product of $\x$ and $\y$. Let $p \geq 1$ be an integer and $P$ be the orthogonal projection onto
\begin{align*}
    \mathrm{span}\big\{ \e_j^{\otimes p} : 1 \leq j \leq n \big\} \subset (\R^n)^{\otimes p}.
\end{align*}
(For example, if $p = 2$ and we identify $\M_n(\R)$ with $\R^n \otimes \R^n$ via vectorization, then $P$ is exactly the projection onto the diagonal of matrix in $\M_n(\R)$.)

It is straightforward to show that $P(\otimes_{j=1}^p \x_j)$ is a vector with at most $n$ nonzero entries, which are exactly the entries of $\x_1 \odot \cdots \odot \x_p$. If we make use of this orthogonal projection $P$ and the vectors $\v = \otimes_{j=1}^p \x_j$ and $\w = \otimes_{j=1}^p \y_j$ in Theorem~\ref{thm:fx_projection}, we immediately obtain the following result:
\begin{corollary}\label{cor:fp_projection_diagonal}
    Let $x \geq 1$ be a real number and $p \geq 1$ be an integer. For any collection of vectors $\x_1,\y_1,\ldots,\x_p,\y_p \in \R^n$, we have
    \[
        \| \x_1 \odot \cdots \odot \x_p\|^x\| \y_1 \odot \cdots \odot \y_p\|^x - |\ip{\x_1 \odot \cdots \odot \x_p}{\y_1 \odot \cdots \odot \y_p}|^x \leq \prod_{j=1}^p \|\x_j\|^x\|\y_j\|^x - \prod_{j=1}^p |\ip{\x_j}{\y_j}|^x.
    \]
\end{corollary}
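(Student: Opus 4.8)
The plan is to apply Theorem~\ref{thm:fx_projection} directly to the tensor-product vectors $\v = \otimes_{j=1}^p \x_j$ and $\w = \otimes_{j=1}^p \y_j$ in $(\R^n)^{\otimes p}$, using the orthogonal projection $P$ onto $\mathrm{span}\{\e_j^{\otimes p} : 1 \leq j \leq n\}$. Once this is set up, the entire argument reduces to identifying the two sides $f_x(P\v,P\w)$ and $f_x(\v,\w)$ (with $f_x$ as in Equation~\eqref{eq:cs_func_x}) with the left- and right-hand sides of the corollary, respectively. All of the real work lies in two standard computations: one showing that $f_x(\v,\w)$ produces the product expression on the right, and one showing that $P$ extracts exactly the Hadamard product so that $f_x(P\v,P\w)$ produces the expression on the left.

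For the right-hand side, I would invoke the multiplicativity of the $2$-norm and the inner product under tensor products, namely $\|\otimes_{j=1}^p \x_j\| = \prod_{j=1}^p \|\x_j\|$ and $\ip{\otimes_{j=1}^p \x_j}{\otimes_{j=1}^p \y_j} = \prod_{j=1}^p \ip{\x_j}{\y_j}$. Substituting these into $f_x(\v,\w) = \|\v\|^x\|\w\|^x - |\ip{\v}{\w}|^x$ and pulling the exponent $x$ through each product (using $|\prod_j \ip{\x_j}{\y_j}|^x = \prod_j |\ip{\x_j}{\y_j}|^x$) immediately yields $\prod_{j=1}^p \|\x_j\|^x\|\y_j\|^x - \prod_{j=1}^p |\ip{\x_j}{\y_j}|^x$, which is exactly the right-hand side of the corollary.

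For the left-hand side, I would write $P = \sum_{j=1}^n \e_j^{\otimes p}(\e_j^{\otimes p})^T$, which is legitimate because the vectors $\{\e_j^{\otimes p}\}_{j=1}^n$ are orthonormal and hence $P$ is the projection onto their span. Computing the coefficient of $\e_j^{\otimes p}$ in $P\v$ gives $\ip{\e_j^{\otimes p}}{\v} = \prod_{i=1}^p \ip{\e_j}{\x_i}$, which is precisely the $j$-th entry of the Hadamard product $\x_1 \odot \cdots \odot \x_p$. Because $\{\e_j^{\otimes p}\}_{j=1}^n$ is orthonormal, the linear map sending each $\e_j^{\otimes p}$ to $\e_j \in \R^n$ is an isometry on the range of $P$, under which $P\v \mapsto \x_1 \odot \cdots \odot \x_p$ and $P\w \mapsto \y_1 \odot \cdots \odot \y_p$. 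Since this isometry preserves norms and inner products, $f_x(P\v,P\w)$ equals the left-hand side of the corollary.

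With both identifications in hand, the inequality $f_x(P\v,P\w) \leq f_x(\v,\w)$ supplied by Theorem~\ref{thm:fx_projection} is exactly the claimed inequality. I do not anticipate a genuine obstacle here; the only point requiring care is confirming that the coefficients of $P\v$ really are the Hadamard-product entries and that no cross terms among distinct $\e_j^{\otimes p}$ contaminate the norm and inner-product computations, both of which follow cleanly from the orthonormality of $\{\e_j^{\otimes p}\}_{j=1}^n$.
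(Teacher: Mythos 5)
Your proposal is correct and follows exactly the paper's route: the paper also obtains this corollary by applying Theorem~\ref{thm:fx_projection} to $\v = \otimes_{j=1}^p \x_j$ and $\w = \otimes_{j=1}^p \y_j$ with $P$ the orthogonal projection onto $\mathrm{span}\{\e_j^{\otimes p} : 1 \leq j \leq n\}$, noting that $P$ extracts the Hadamard-product entries and that norms and inner products are multiplicative under tensor products. Your write-up simply makes explicit the verification (orthonormality of $\{\e_j^{\otimes p}\}$, the rank-one decomposition of $P$, and the isometry onto $\R^n$) that the paper leaves as ``straightforward to show.''
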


The above corollary has numerous special cases of interest. For example, if we choose $\x_j = \x$ and $\y_j = \y$ for all $1 \leq j \leq p$ then we get the following further specialization in the $x = 1$ case:

\begin{corollary}\label{cor:equal_tensors}
    Let $p \geq 1$ be an integer. For any pair of vectors $\x,\y \in \R^n$, we have
    \begin{align}\label{ineq:integer_p}
        \| \x^p \|\| \y^p \| - |\ip{\x^p}{\y^p}| \leq \|\x\|^{p}\|\y\|^{p} - |\ip{\x}{\y}|^{p}.
    \end{align}
\end{corollary}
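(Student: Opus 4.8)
The plan is to read the inequality off directly from Corollary~\ref{cor:fp_projection_diagonal} by making the single substitution $\x_j = \x$ and $\y_j = \y$ for every $1 \le j \le p$, together with the exponent choice $x = 1$. The genuine content has already been absorbed into the earlier results (Theorem~\ref{thm:fx_projection} and the corollary built from it), so what remains is purely a matter of simplifying the resulting expressions.

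First I would recall the paper's convention that $\x^p$ denotes the vector whose entries are the $p$-th powers of those of $\x$, exactly as $\v^2$ was used in Inequality~\eqref{eq:cs_original}. Since the Hadamard product acts entrywise, the $p$-fold product $\x \odot \cdots \odot \x$ has $i$-th entry $x_i \cdots x_i = x_i^p$, so $\x \odot \cdots \odot \x = \x^p$ and likewise $\y \odot \cdots \odot \y = \y^p$. This identifies the two terms on the left-hand side of Corollary~\ref{cor:fp_projection_diagonal} with $\|\x^p\|\|\y^p\|$ and $|\ip{\x^p}{\y^p}|$.

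Next I would collapse the two products on the right-hand side. With all factors equal and $x = 1$, we have $\prod_{j=1}^p \|\x_j\|\|\y_j\| = \|\x\|^p\|\y\|^p$ and $\prod_{j=1}^p |\ip{\x_j}{\y_j}| = |\ip{\x}{\y}|^p$. Substituting these into Corollary~\ref{cor:fp_projection_diagonal} yields Inequality~\eqref{ineq:integer_p} verbatim, completing the argument.

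There is essentially no obstacle here, since the one step that could in principle cause trouble---verifying that the projection machinery of Theorem~\ref{thm:fx_projection} really produces the Hadamard product on the distinguished diagonal---was already dispatched in establishing Corollary~\ref{cor:fp_projection_diagonal}, and here we merely instantiate its conclusion. If a self-contained derivation were preferred, the same result would follow by applying Theorem~\ref{thm:fx_projection} with $x = 1$ to the tensor powers $\v = \x^{\otimes p}$ and $\w = \y^{\otimes p}$ under the projection $P$ onto $\mathrm{span}\{\e_j^{\otimes p} : 1 \le j \le n\}$, but invoking the corollary is the cleaner route.
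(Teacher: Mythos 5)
Your proposal is correct and is exactly the paper's own argument: the paper obtains Corollary~\ref{cor:equal_tensors} by specializing Corollary~\ref{cor:fp_projection_diagonal} to $\x_j = \x$, $\y_j = \y$ for all $1 \leq j \leq p$ with $x = 1$, which is precisely your substitution. Your simplifications of the Hadamard products to $\x^p$, $\y^p$ and of the right-hand products to $\|\x\|^p\|\y\|^p$ and $|\ip{\x}{\y}|^p$ match the intended reading of that specialization.
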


\noindent If we specialize further to $p = 2$, then we obtain Inequality~\eqref{eq:cs_original}.

\subsection{A Sum-of-Squares Decomposition}\label{sec:sum_of_squares}

In this subsection, we provide another proof of Corollary~\ref{cor:equal_tensors}, which we believe to be of independent interest, in the case that $p \geq 2$ is an even integer.

Recall that the original proof of Inequality~\eqref{eq:cs_original} from \cite{MO_Lemma,JM19} was based on Lagrange's identity, which states that
\begin{align}\label{eq:lagrange}
    \|\v\|^2\|\w\|^2 - \ip{\v}{\w}^2 = \frac{1}{2}\sum_{i=1}^{n} \sum_{j=1,j\neq i}^n (v_i w_j - v_j w_i)^2.
\end{align}
This inequality is interesting for the fact that it implies that the polynomial $\|\v\|^2\|\w\|^2 - \ip{\v}{\w}^2$ (which may be regarded as a degree-$4$ homogeneous polynomial in the $2n$ variables $v_1$, $\ldots$, $v_n$, $w_1$, $\ldots$, $w_n$) is not just positive semidefinite (a statement that is equivalent to the Cauchy--Schwarz inequality) but can even be written as a sum of squares (a stronger property).

The question of whether a given polynomial can be written as a sum of squares dates back to 1888 \cite{Hilbert} and is the 17th problem in Hilbert's famous list of important open mathematical problems (compiled in 1900). Though there is much interest in this problem from a purely mathematical standpoint, more recent interest in the sum-of-squares problem relates to optimizing multivariable polynomials via semidefinite programming \cite{parrilo2000}. In particular, determining whether a given polynomial is positive semidefinite is NP-hard; however, identifying a polynomial as a sum of squares immediately certifies this property.

Lagrange's identity shows that the polynomial $\|\v\|^2\|\w\|^2 - \ip{\v}{\w}^2$ is not just positive semidefinite but can also be written as a sum of squares. We now provide a generalization of this identity demonstrating that the same is true of the polynomial $\|\v\|^p\|\w\|^p - \ip{\v}{\w}^p$ whenever $p \geq 2$ is an even integer:

\begin{theorem}\label{thm:sumofsquares}
    If $k \geq 1$ is an integer, then
    \begin{equation}\label{eq:SOS}
        \|\v\|^{2k}\|\w\|^{2k} - \ip{\v}{\w}^{2k} = \frac{1}{2}\sum_{\substack{a_1 + \cdots + a_n = k\\ b_1 + \cdots + b_n = k}}{k\choose a_1, a_2, \ldots, a_n}{k\choose b_1, b_2, \ldots, b_n}\left(\prod_{i = 1}^{n} v_i^{a_i}w_i^{b_i} - \prod_{i = 1}^{n} v_i^{b_i}w_i^{a_i}\right)^2,
    \end{equation}
    where the sum is over all $n$-tuples $(a_1, \ldots, a_n)$ and $(b_1, \ldots, b_n)$ satisfying $a_i,b_i \geq 0$ for all $1 \leq i \leq n$ and $a_1 + \cdots + a_n = b_1 + \cdots + b_n = k$.
\end{theorem}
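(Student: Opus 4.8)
The plan is to deduce the identity from Lagrange's identity~\eqref{eq:lagrange}, applied not to $\v$ and $\w$ themselves but to their $k$-fold tensor powers. Set $\mathbf{V} \defeq \v^{\otimes k}$ and $\mathbf{W} \defeq \w^{\otimes k}$, regarded as vectors in $(\R^n)^{\otimes k} \cong \R^{n^k}$ whose components are indexed by multi-indices $I = (i_1,\ldots,i_k) \in \{1,\ldots,n\}^k$, with components $V_I = v_{i_1}\cdots v_{i_k}$ and $W_I = w_{i_1}\cdots w_{i_k}$. Multiplicativity of the inner product under tensor products gives $\|\mathbf{V}\|^2 = \|\v\|^{2k}$, $\|\mathbf{W}\|^2 = \|\w\|^{2k}$, and $\ip{\mathbf{V}}{\mathbf{W}} = \ip{\v}{\w}^k$, so that the left-hand side of~\eqref{eq:SOS} is exactly $\|\mathbf{V}\|^2\|\mathbf{W}\|^2 - \ip{\mathbf{V}}{\mathbf{W}}^2$.

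First I would apply Lagrange's identity~\eqref{eq:lagrange} to $\mathbf{V}$ and $\mathbf{W}$, obtaining
\[
    \|\v\|^{2k}\|\w\|^{2k} - \ip{\v}{\w}^{2k} = \frac{1}{2}\sum_{I}\sum_{J \neq I}\big(V_I W_J - V_J W_I\big)^2,
\]
where $I$ and $J$ range over all multi-indices in $\{1,\ldots,n\}^k$. Since the diagonal terms $I = J$ contribute zero, I may freely include them and sum over all ordered pairs $(I,J)$.

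The remaining work—and the step that needs the most care—is a purely combinatorial regrouping of this multi-index sum. The value $V_I = \prod_{m=1}^n v_m^{a_m}$ depends on $I$ only through its exponent profile $a = (a_1,\ldots,a_n)$, where $a_m$ counts the occurrences of $m$ in $I$ and $a_1 + \cdots + a_n = k$; the number of multi-indices $I$ with a given profile $a$ is exactly the multinomial coefficient $\binom{k}{a_1,\ldots,a_n}$. Grouping $I$ by its profile $a$ and $J$ by its profile $b$, each of the $\binom{k}{a_1,\ldots,a_n}\binom{k}{b_1,\ldots,b_n}$ resulting pairs contributes the identical squared term $\big(\prod_i v_i^{a_i}w_i^{b_i} - \prod_i v_i^{b_i}w_i^{a_i}\big)^2$, since $V_I W_J = \prod_i v_i^{a_i}w_i^{b_i}$ and $V_J W_I = \prod_i v_i^{b_i}w_i^{a_i}$. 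Collecting these contributions turns the sum over pairs of multi-indices into the sum over pairs of profiles appearing in~\eqref{eq:SOS}, with precisely the claimed multinomial weights, completing the proof. The one point to verify carefully is this bookkeeping—that the counting of multi-indices per profile is exact and that the regrouping preserves the antisymmetric squared structure—but no analytic input beyond Lagrange's identity is required.
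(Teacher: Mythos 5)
Your proof is correct, but it takes a genuinely different route from the paper's. The paper proves Equation~\eqref{eq:SOS} by brute force: it expands $\|\v\|^{2k}\|\w\|^{2k} - \ip{\v}{\w}^{2k}$ via the multinomial theorem, expands the square on the right-hand side, and checks that the two resulting sums over pairs of exponent profiles coincide term by term. You instead treat the theorem as Lagrange's identity in disguise: applying the classical identity~\eqref{eq:lagrange} (which is the $k=1$ case, stated and used elsewhere in the paper) to the tensor powers $\v^{\otimes k}, \w^{\otimes k} \in \R^{n^k}$, and then regrouping the resulting sum over pairs of multi-indices $(I,J)$ according to their exponent profiles $(a,b)$, with the multinomial coefficients arising exactly as the number of multi-indices realizing each profile. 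Your bookkeeping is sound: the diagonal terms $I = J$ vanish so they may be included freely, $V_I W_J - V_J W_I$ depends on $(I,J)$ only through $(a,b)$, and the count of ordered pairs with given profiles factors as $\binom{k}{a_1,\ldots,a_n}\binom{k}{b_1,\ldots,b_n}$. What each approach buys: the paper's computation is fully self-contained (it does not assume Lagrange's identity, and in fact reproves it as the $k=1$ case), while yours is shorter, explains \emph{why} the identity holds, and fits the tensor-power philosophy the paper already uses in Section~\ref{sec:higher_exponents} to prove Corollary~\ref{cor:equal_tensors}; the cost is that Lagrange's identity must be taken as known input, which is harmless since it is classical.
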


\begin{proof}
    The left-hand side of Equation~\eqref{eq:SOS} is
    \begin{align}
        \|\v\|^{2k}\|\w\|^{2k} - \ip{\v}{\w}^{2k}\nonumber & = \left(\sum_{i=1}^n v_i^2\right)^k\left(\sum_{i=1}^nw_i^2\right)^k - \left(\sum_{i=1}^n v_iw_i\right)^k\left(\sum_{i=1}^n v_iw_i\right)^k\nonumber \\
        & = \left(\sum_{a_1 + \cdots + a_n = k}{k\choose a_1,\ldots, a_n}\prod_{i = 1}^{n} v_i^{2a_i}\right)\left(\sum_{b_1 + \cdots + b_n = k} {k\choose b_1,\ldots, b_n}\prod_{i = 1}^{n} w_i^{2b_i}\right)\nonumber \\
        & \qquad - \left(\sum_{a_1 + \cdots + a_n = k}{k\choose a_1, \ldots, a_n}\prod_{i = 1}^{n} v_i^{a_i}w_i^{a_i}\right)\left(\sum_{b_1 + \cdots + b_n =k}{k\choose b_1,\ldots, b_n}\prod_{i = 1}^{n} v_i^{b_i}w_i^{b_i}\right)\nonumber \\
        & = \sum_{\substack{a_1 + \cdots + a_n = k\\ b_1 + \cdots + b_n = k}} {k\choose a_1,\ldots, a_n}{k\choose b_1,\ldots, b_n}\left(\prod_{i = 1}^{n} v_i^{2a_i}\right)\left(\prod_{i = 1}^{n} w_i^{2b_i}\right)\nonumber \\
        & \qquad - \sum_{\substack{a_1 + \cdots + a_n = k\\ b_1 + \cdots + b_n = k}} {k\choose a_1,\ldots, a_n}{k\choose b_1,\ldots, b_n}\left(\prod_{i = 1}^{n} v_i^{a_i}w_i^{a_i}\right)\left(\prod_{i = 1}^{n} v_i^{b_i}w_i^{b_i}\right)\nonumber \\
        & = \sum_{\substack{a_1 + \cdots + a_n = k\\ b_1 + \cdots + b_n = k}} {k\choose a_1,\ldots, a_n}{k\choose b_1,\ldots, b_n}\prod_{i = 1}^{n} v_i^{2a_i}w_i^{2b_i}\nonumber \\
        & \qquad - \sum_{\substack{a_1 + \cdots + a_n = k\\ b_1 + \cdots + b_n = k}} {k\choose a_1,\ldots, a_n}{k\choose b_1,\ldots, b_n}\prod_{i = 1}^{n} v_i^{a_i + b_i}w_i^{a_i + b_i}.\label{eq:LHS}
    \end{align}
    On the other hand, the right-hand side of Equation~\eqref{eq:SOS} is 
    \begin{align}
        & \frac{1}{2}\sum_{\substack{a_1 + \cdots + a_n = k\\ b_1 + \cdots + b_n = k}}{k\choose a_1,\ldots, a_n}{k\choose b_1,\ldots, b_n}\left(\prod_{i = 1}^{n} v_i^{a_i}w_i^{b_i} - \prod_{i = 1}^{n} v_i^{b_i}w_i^{a_i}\right)^2 \nonumber \\
        & \qquad = \frac{1}{2}\sum_{\substack{a_1 + \cdots + a_n = k\\ b_1 + \cdots + b_n = k}}{k\choose a_1,\ldots, a_n}{k\choose b_1,\ldots, b_n}\left(\prod_{i = 1}^{n} v_i^{2a_i}w_i^{2b_i} + \prod_{i = 1}^{n} v_i^{2b_i}w_i^{2a_i} - 2\prod_{i = 1}^{n} v_i^{a_i}w_i^{b_i} v_i^{b_i}w_i^{a_i}\right)\nonumber \\
        & \qquad = \frac{1}{2}\sum_{\substack{a_1 + \cdots + a_n = k\\ b_1 + \cdots + b_n = k}}{k\choose a_1,\ldots, a_n}{k\choose b_1,\ldots, b_n}\left(\prod_{i = 1}^{n} v_i^{2a_i}w_i^{2b_i} + \prod_{i = 1}^{n} v_i^{2b_i}w_i^{2a_i}\right)\nonumber \\
        & \qquad \qquad - \frac{1}{2}\sum_{\substack{a_1 + \cdots + a_n = k\\ b_1 + \cdots + b_n = k}}{k\choose a_1,\ldots, a_n}{k\choose b_1,\ldots, b_n}\left(2\prod_{i = 1}^{n} v_i^{a_i + b_i}w_i^{b_i+a_i}\right)\nonumber \\
        & \qquad = \sum_{\substack{a_1 + \cdots + a_n = k\\ b_1 + \cdots + b_n = k}}{k\choose a_1,\ldots, a_n}{k\choose b_1,\ldots, b_n}\prod_{i = 1}^{n} v_i^{2a_i}w_i^{2b_i}\nonumber \\
        & \qquad \qquad - \sum_{\substack{a_1 + \cdots + a_n = k\\ b_1 + \cdots + b_n = k}}{k\choose a_1,\ldots, a_n}{k\choose b_1,\ldots, b_n}\prod_{i = 1}^{n} v_i^{a_i + b_i}w_i^{b_i+a_i}.\label{eq:RHS}
    \end{align}
    A comparison of the quantities at the end of Equations~\eqref{eq:LHS} and~\eqref{eq:RHS} reveals that they are equal, thus completing the proof.
\end{proof}

We now use the above sum-of-squares decomposition to provide an alternate proof of Corollary~\ref{cor:equal_tensors} in the case that $p \geq 2$ is an even integer:

\begin{corollary}\label{cor:equal_tensors_from_lagrange}
    Let $k \geq 1$ be an integer. For any pair of vectors $\v,\w \in \R^n$, we have
    \begin{align}\label{ineq:integer_p}
        \| \v^{2k} \|\| \w^{2k} \| - \ip{\v^{2k}}{\w^{2k}} \leq \| \v^{k} \|^2\| \w^{k} \|^2 - \ip{\v^{k}}{\w^{k}}^2 \leq \|\v\|^{2k}\|\w\|^{2k} - \ip{\v}{\w}^{2k}.
    \end{align}
\end{corollary}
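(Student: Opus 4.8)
The plan is to prove the two inequalities in the chain separately, after the cosmetic substitution $\a = \v^k$ and $\b = \w^k$ (entrywise $k$-th powers). With this notation the entrywise squares satisfy $\a^2 = \v^{2k}$ and $\b^2 = \w^{2k}$, while $\|\v^k\|^2 = \|\a\|^2$, $\ip{\v^k}{\w^k} = \ip{\a}{\b}$, and so on. The left-hand inequality then reads $\|\a^2\|\|\b^2\| - \ip{\a^2}{\b^2} \leq \|\a\|^2\|\b\|^2 - \ip{\a}{\b}^2$, which is nothing more than the original Inequality~\eqref{eq:cs_original} applied to $\a$ and $\b$. So the left-hand inequality requires no new work: it is the base case of the whole story, established independently in Section~\ref{sec:intuitive_geometric}.

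All of the content is in the right-hand inequality, and here I would lean on the sum-of-squares decomposition of Theorem~\ref{thm:sumofsquares} — which is the reason this corollary belongs in the present subsection. Abbreviate the three quantities in the chain as $L$, $M$, $R$ from left to right. The middle term $M = \|\a\|^2\|\b\|^2 - \ip{\a}{\b}^2$ is exactly the left-hand side of Lagrange's identity~\eqref{eq:lagrange} for the pair $\a, \b$, so $M = \tfrac{1}{2}\sum_{i}\sum_{j \neq i}(a_i b_j - a_j b_i)^2 = \tfrac{1}{2}\sum_{i \neq j}(v_i^k w_j^k - v_j^k w_i^k)^2$. The right term $R = \|\v\|^{2k}\|\w\|^{2k} - \ip{\v}{\w}^{2k}$ is, by Theorem~\ref{thm:sumofsquares}, a sum of squares indexed by pairs of multi-indices $(a_1,\ldots,a_n)$ and $(b_1,\ldots,b_n)$ each summing to $k$, weighted by non-negative products of multinomial coefficients.

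The key step — and the one I expect to require the most care — is to recognize $M$ as a sub-sum of $R$. Concretely, I would isolate in the decomposition of $R$ exactly those terms whose two multi-indices are each concentrated on a single coordinate, say $a_s = k$ and $b_t = k$ with all other entries zero. For such a pair both multinomial coefficients equal $1$ (each has a single nonzero entry, equal to $k$), and the squared factor collapses to $(v_s^k w_t^k - v_t^k w_s^k)^2$. Summing these singleton terms over all $s,t$ (the diagonal $s = t$ contributing zero) reproduces precisely $\tfrac{1}{2}\sum_{s \neq t}(v_s^k w_t^k - v_t^k w_s^k)^2 = M$. Since every remaining term in the decomposition of $R$ is a non-negative weight times a square, discarding them can only decrease the sum; hence $M \leq R$. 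Chaining $L \leq M \leq R$ then yields the corollary, and as a byproduct gives the promised alternate proof of Corollary~\ref{cor:equal_tensors} for the even exponent $p = 2k$.

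The only genuine care needed is the bookkeeping in this last matching: verifying that the singleton multi-indices really form a distinct family of terms in the sum (so the discarded terms are genuinely separate and non-negative), and that the ordered double sum over $(s,t)$ aligns with the ordered double sum in Lagrange's identity, including the factor of $\tfrac{1}{2}$ and the vanishing diagonal. None of this is deep, but it is where the argument must be written out rather than merely asserted.
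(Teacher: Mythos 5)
Your proposal is correct and follows essentially the same route as the paper's own proof: the left inequality by applying Inequality~\eqref{eq:cs_original} to $\v^k$ and $\w^k$, and the right inequality by recognizing the Lagrange-identity expansion of $\|\v^k\|^2\|\w^k\|^2 - \ip{\v^k}{\w^k}^2$ as exactly the sub-sum of the decomposition in Theorem~\ref{thm:sumofsquares} coming from multi-index pairs concentrated on single coordinates, with all discarded terms non-negative. The bookkeeping you flag (unit multinomial coefficients, vanishing diagonal, matching factors of $\tfrac{1}{2}$) is precisely what the paper's proof implicitly relies on, so no gap remains.
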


\begin{proof}
    The left inequality follows simply from applying Inequality~\eqref{eq:cs_original} to the vectors $\v^k$ and $\w^k$. To obtain the right inequality, we start by applying Lagrange's identity (Equation~\eqref{eq:lagrange}) to $\v^k$ and $\w^k$:
    \begin{align}\label{eq:vkwk_lagrange}
        \| \v^{k} \|^2\| \w^{k} \|^2 - \ip{\v^{k}}{\w^{k}}^2 = \frac{1}{2}\sum_{i=1}^{n} \sum_{j=1,j\neq i}^n (v_i^k w_j^k - v_j^k w_i^k)^2.
    \end{align}

    Now notice that each term $(v_i^k w_j^k - v_j^k w_i^k)^2$ in Equation~\eqref{eq:vkwk_lagrange} arises as the term
    \[
        \left(\prod_{i = 1}^{n} v_i^{a_i}w_i^{b_i} - \prod_{i = 1}^{n} v_i^{b_i}w_i^{a_i}\right)^2
    \]
    in Equation~\eqref{eq:SOS} when $a_i = k$, $b_j = k$, and all the other $a$'s and $b$'s are $0$. Since the formula given in Equation~\eqref{eq:SOS} consists precisely of the terms in Equation~\eqref{eq:vkwk_lagrange} plus additional non-negative terms, it must be the case that $\| \v^{k} \|^2\| \w^{k} \|^2 - \ip{\v^{k}}{\w^{k}}^2 \leq \|\v\|^{2k}\|\w\|^{2k} - \ip{\v}{\w}^{2k}$. This is exactly the right inequality that we set out to prove, so we are done.
\end{proof}

\section{Generalization to More Vectors}\label{sec:quantum}

In this section, we generalize Inequality~\eqref{eq:cs_original} to more than two vectors. To do this, it is useful for us to clarify how this inequality was originally derived, since this line of reasoning is the very same one that we will be generalizing. This method was implicitly presented in \cite{JM19} but never explicitly described; moreover, it relies on several nontrivial facts from quantum information theory that we now introduce.

\subsection{Inequality~\eqref{eq:cs_original} via Separability of Quantum States}\label{sec:quantum_original}

A positive semidefinite matrix $X \in \M_n(\C) \otimes \M_n(\C)$ is called \emph{separable} if it has a decomposition of the form
\begin{align}\label{eq:separable_bipartite}
    X = \sum_{j=1}^r Y_j \otimes Z_j,
\end{align}
where $r$ is a positive integer and $\{Y_j\}, \{Z_j\} \subset \M_n(\C)$ are finite sets of positive semidefinite matrices (note that in this context, positive semidefinite matrices are necessarily Hermitian). Determining separability of a matrix is NP-hard \cite{Gha10,Gur03}---indeed, this is a central research problem in quantum information theory \cite{GT09,HHH09}.

It is straightforward to show that if $X \in \M_n(\C) \otimes \M_n(\C)$ is separable, then so is $(A \otimes B)X(A \otimes B)^*$ for every $A, B \in \M_n(\C)$. Since the set of separable matrices is convex, it follows that separability of $X$ implies separability of
\begin{align}\label{eq:local_diag_twirl}
    T(X) := \int_U (U \otimes U)X(U \otimes U)^* \, \mathrm{d}U,
\end{align}
where the integration is with respect to Haar measure over the set of diagonal unitary matrices $U$. We call $T(X)$ the \emph{local diagonal unitary twirl} of $X$. If we index the entries of $X$ as
\begin{align*}
    [X]_{(i_1,j_1),(i_2,j_2)} := (\e_{i_1}\otimes \e_{i_2})^* X(\e_{j_1}\otimes \e_{j_2}),
\end{align*}
then straightforward computation shows that for a diagonal unitary matrix $U$ with diagonal entries $u_1$, $u_2$, $\ldots$, $u_n$ we have
\begin{align*}
    \big[(U \otimes U)X(U \otimes U)^*\big]_{(i_1,j_1),(i_2,j_2)} = u_{i_1}u_{i_2}\overline{u_{j_1}u_{j_2}}[X]_{(i_1,j_1),(i_2,j_2)}.
\end{align*}
Integrating this quantity then reveals that the following explicit formula for the local diagonal unitary twirl:
\[
    T(X) = \sum_{i,j=1}^n [X]_{(i,i),(j,j)}\e_i\e_j^* \otimes \e_i\e_j^* + \sum_{i \neq j=1}^n [X]_{(i,j),(j,i)}\e_i\e_j^* \otimes \e_j\e_i^*
\]
(see \cite{SN21} for more detailed calculations of this type).

One method of showing that a matrix is not separable is the \emph{realignment criterion} \cite{CW03,Rud03}, which states that if $X \in \M_n(\C) \otimes \M_n(\C)$ is separable and $R : \M_n(\C) \otimes \M_n(\C) \rightarrow \M_n(\C) \otimes \M_n(\C)$ is the linear map defined by
\begin{align}\label{eq:realignment_map}
    R\big(\e_i\e_j^T \otimes \e_k\e_\ell^T\big) = \e_i\e_k^T \otimes \e_j\e_\ell^T,
\end{align}
then $\|R(X)\|_{\textup{tr}} \leq \tr(X)$.\footnote{In practice, the contrapositive of this statement is used: it is straightforward to compute $\|R(X)\|_{\textup{tr}}$, and if this quantity is larger than $\tr(X)$, then we know that $X$ cannot be separable.} If we apply the realignment criterion of the local diagonal unitary twirl of $X = \v\v^* \otimes \w\w^*$ (which is clearly separable), we learn that for all vectors $\v,\w \in \R^n$, we have
\begin{align}\label{eq:realign_ineq_vw}
    \|R(T(\v\v^T \otimes \w\w^T))\|_{\textup{tr}} \leq \tr(T(\v\v^T \otimes \w\w^T)) = \tr(\v\v^T \otimes \w\w^T) = \|\v\|^2\|\w\|^2.
\end{align}
The trace norm on the left-hand side is in fact straightforward to compute, since the matrix $R(T(\v\v^T \otimes \w\w^T))$ is (up to permutation of the rows and columns) block diagonal with diagonal blocks equal to $(\v^2)(\w^2)^T$ and $v_iv_kw_iw_k$ for $1 \leq i \neq k \leq n$. It therefore follows that
\begin{align}\begin{split}\label{eq:realign_explicit}
    \big\|\v^2 (\w^2)^T\big\|_{\textup{tr}} + \sum_{i \neq k} |v_i v_k w_i w_k| \leq \|\v\|^2\|\w\|^2.
\end{split}\end{align}
Using the fact that $\|\v^2 (\w^2)^T\|_{\textup{tr}} = \|\v^2\|\|\w^2\|$, rearranging slightly gives exactly Inequality~\eqref{eq:cs_original}, exactly as desired. (We could write an analogous inequality for all vectors $\v,\w \in \C^n$ instead, but this would not actually be any more general---terms like $\v^2$ would simply replaced by terms like $\v \odot \overline{\v}$.)

\subsection{Multipartite Tensor Products and More Vectors}\label{sec:quantum_multi}

To generalize the argument from Section~\ref{sec:quantum_original} to three or more vectors, we work in the tensor product spaces $(\C^n)^{\otimes p}$ and $(\M_n(\C))^{\otimes p}$, where $p > 2$. We say that a positive semidefinite matrix $X \in (\M_n(\C))^{\otimes p}$ is \emph{separable} if it has a decomposition of the form
\[
    X = \sum_{j=1}^r Y_j^{(1)} \otimes Y_j^{(2)} \otimes \cdots \otimes Y_j^{(p)},
\]
where $r$ is a positive integer and $\{Y_j^{(k)}\} \subset \M_n(\C)$ is a finite set of positive semidefinite matrices (this recovers the definition of separability from Equation~\eqref{eq:separable_bipartite} when $p = 2$).

There are many possible ways to generalize the realignment criterion to this setting. Given a permutation $\sigma \in \S_{2p}$, we say that the \textit{realignment map associated with $\sigma$} is the linear map $R_\sigma : (\M_n(\C))^{\otimes p} \rightarrow (\M_n(\C))^{\otimes p}$ defined by
\begin{align}\label{eq:multipartite_realign}
    R_\sigma\big(\v_1\v_{p+1}^T \otimes \v_2\v_{p+2}^T \otimes \cdots \otimes \v_p\v_{2p}^T\big) = \v_{\sigma(1)}\v_{\sigma(p+1)}^T \otimes \v_{\sigma(2)}\v_{\sigma(p+2)}^T \otimes \cdots \otimes \v_{\sigma(p)}\v_{\sigma(2p)}^T
\end{align}
for all $\v_1,\v_2,\ldots,\v_{2p} \in \C^n$. (In the case that $p = 2$ and $\sigma = (1,3,2,4)$, this is just the usual realignment map from Equation~\eqref{eq:realignment_map}.) It is known \cite{HHH06} that $\|R_\sigma(X)\|_{\textup{tr}} \leq \tr(X)$ for all permutations $\sigma \in \S_{2p}$ and all separable $X \in (\M_n(\C))^{\otimes p}$, thus generalizing the realignment criterion.

To generalize the local diagonal unitary twirl from Equation~\eqref{eq:local_diag_twirl} to the $p > 2$ case, we simply take higher tensor powers of the diagonal unitary matrix $U$:
\begin{align*}
    T(X) := \int_U (U^{\otimes p})X(U^{\otimes p})^* \, \mathrm{d}U,
\end{align*}
for all $X \in (\M_n(\C))^{\otimes p}$ (here the integration is with respect to Haar measure over the set of diagonal unitary matrices $U$). If we index the entries of $X$ via
\begin{align*}
    [X]_{(i_1,j_1),\ldots,(i_p,j_p)} := (\e_{i_1}\otimes \cdots \otimes \e_{i_p})^* X(\e_{j_1}\otimes \cdots \otimes \e_{j_p}),
\end{align*}
then straightforward computation shows that
\begin{align*}
    \big[(U^{\otimes p})X(U^{\otimes p})^*\big]_{(i_1,j_1),\ldots,(i_p,j_p)} = (u_{i_1}\cdots u_{i_p})\overline{(u_{j_1}\cdots u_{j_p})}[X]_{(i_1,j_1),\ldots,(i_p,j_p)}.
\end{align*}
Integrating this quantity then reveals that
\[
    \big[T(X)\big]_{(i_1,j_1),\ldots,(i_p,j_p)} = \begin{cases}
        [X]_{(i_1,j_1),\ldots,(i_p,j_p)} & \text{if} \ \{i_1,\ldots,i_p\} = \{j_1,\ldots,j_p\} \\
        0 & \text{otherwise},
    \end{cases}
\]
where $\{i_1,\ldots,i_p\}$ and $\{j_1,\ldots,j_p\}$ are multisets. Putting all of this together immediately gives the following generalization of Inequality~\eqref{eq:cs_original}:

\begin{theorem}\label{thm:generalize_many_vecs}
    Suppose that $\v_1,\ldots,\v_p \in \C^n$ are vectors and $\sigma \in S_{2p}$ is a permutation. Then
    \begin{align}\label{eq:generalize_more_vecs_ineq}
        \big\|R_{\sigma}\big(T(\v_1
        \v_1^*\otimes \cdots \otimes \v_p\v_p^*)\big)\big\|_{\textup{tr}} \leq \prod_{j=1}^p \big\|\v_j\big\|^2.
    \end{align}
\end{theorem}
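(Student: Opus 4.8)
The plan is to recognize Inequality~\eqref{eq:generalize_more_vecs_ineq} as an essentially immediate consequence of the generalized realignment criterion recalled just above the theorem, applied to the matrix $X := \v_1\v_1^* \otimes \cdots \otimes \v_p\v_p^*$. The shape of the right-hand side is the crucial hint: since $\prod_{j=1}^p \|\v_j\|^2 = \prod_{j=1}^p \tr(\v_j\v_j^*) = \tr(X)$, I expect the bound to arise precisely from the inequality $\|R_\sigma(T(X))\|_{\textup{tr}} \leq \tr(T(X))$ together with the fact that the twirl $T$ is trace-preserving. So the whole argument should reduce to assembling three facts: that $T(X)$ is separable, that the realignment criterion then applies to it, and that $\tr(T(X)) = \tr(X) = \prod_{j=1}^p \|\v_j\|^2$.

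First I would observe that $X$ is separable, being the tensor product of the rank-one positive semidefinite matrices $\v_j\v_j^*$ (it is of the required form with $r = 1$ and $Y_1^{(j)} = \v_j\v_j^*$). Next I would argue that separability survives the twirl: for each diagonal unitary $U$, conjugation by $U^{\otimes p}$ sends each factor $\v_j\v_j^*$ to the positive semidefinite matrix $(U\v_j)(U\v_j)^*$, so $(U^{\otimes p})X(U^{\otimes p})^*$ is again separable; and since $T(X)$ is an average of such matrices over the convex (and closed) set of separable states, $T(X)$ is itself separable. Having established this, I would invoke the generalized realignment criterion of \cite{HHH06} to conclude $\|R_\sigma(T(X))\|_{\textup{tr}} \leq \tr(T(X))$, and finally use that $T$ is an average of unitary conjugations (hence trace-preserving) to replace $\tr(T(X))$ by $\tr(X) = \prod_{j=1}^p \|\v_j\|^2$, which is exactly the desired bound.

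I do not anticipate a genuinely hard step here: the theorem is really a bookkeeping exercise that stitches together the ingredients developed throughout Section~\ref{sec:quantum}. The only two points that merit a moment's care are that the twirl $T$ maps separable matrices to separable matrices---which follows from convexity exactly as in the bipartite argument of Section~\ref{sec:quantum_original}---and that $T$ preserves the trace, so that $\tr(T(X))$ can be evaluated directly on $X$. Both transfer from the $p = 2$ case with no new difficulty, so the substance of the result lies entirely in having set up the multipartite twirl and realignment maps correctly beforehand.
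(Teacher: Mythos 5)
Your proposal is correct and follows exactly the paper's own (implicit) argument: the paper assembles Theorem~\ref{thm:generalize_many_vecs} from precisely the same three ingredients---separability of $\v_1\v_1^*\otimes\cdots\otimes\v_p\v_p^*$, preservation of separability under the twirl $T$ via convexity, and the generalized realignment criterion of \cite{HHH06} combined with trace preservation of $T$. There is nothing to add; your handling of the two points needing care (separability surviving the average, and $\tr(T(X)) = \tr(X)$) matches the reasoning laid out in Sections~\ref{sec:quantum_original} and~\ref{sec:quantum_multi}.
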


In the special case that $p = 2$ and $\sigma = (1,3,4,2)$, the left-hand side of Inequality~\eqref{eq:generalize_more_vecs_ineq} simplifies to $\|\v_1^2\|\|\v_2^2\| - \ip{\v_1^2}{\v_2^2} + \ip{\v_1}{\v_2}^2$, thus recovering exactly Inequality~\eqref{eq:cs_original}. In fact, this is the only nontrivial inequality that arises from Theorem~\ref{thm:generalize_many_vecs} in the $p = 2$ case: every one of the $4! = 24$ possible permutations results in one of two values for the left-hand side of Inequality~\eqref{eq:generalize_more_vecs_ineq}:
\begin{itemize}
    \item $\|\v_1\|^2\|\v_2\|^2$ (e.g., if $\sigma$ is the identity permutation). This results in the trivial inequality $\|\v_1\|^2\|\v_2\|^2 \leq \|\v_1\|^2\|\v_2\|^2$.

    \item $\|\v_1^2\|\|\v_2^2\| - \ip{\v_1^2}{\v_2^2} + \ip{\v_1}{\v_2}^2$ (e.g., if $\sigma = (1,3,4,2)$). This results in Inequality~\eqref{eq:cs_original}.
\end{itemize}

For larger values of $p$, new inequalities result from choosing one of the $(2p)!$ possible permutations in Theorem~\ref{thm:generalize_many_vecs}. For example, in the $p = 3$ case we obtain the following:

\begin{corollary}\label{cor:tripartite_cs}
    Suppose that $\v,\w,\x \in \R^n$ are vectors. Then
    \begin{align*}
        & \sum_{j=1}^n \sqrt{\big(v_j^2\|\v \odot \w\|^2 + w_j^2\|\v^2\|^2 - v_j^4 w_j^2\big)\big(x_j^2\|\x \odot \w\|^2 + w_j^2\|\x^2\|^2 - x_j^4 w_j^2\big)} \\
        & + \sum_{i=1}^n v_ix_i\sqrt{\sum_{j=1,j\neq i}^n\sum_{k=1,k\neq i,k \neq j}^n v_j^2 w_k^2}\sqrt{\sum_{j=1,j\neq i}^n\sum_{k=1,k\neq i,k \neq j}^n x_j^2 w_k^2} \\
        & \leq \|\v\|^2\|\w\|^2\|\x\|^2 + \|\v \odot \w \odot \x\|^2 - \ip{\v \odot \w}{\x \odot \w}\ip{\v}{\x}.
    \end{align*}
\end{corollary}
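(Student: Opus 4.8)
The plan is to obtain this inequality as the explicit $p = 3$ instance of Theorem~\ref{thm:generalize_many_vecs}, applied to $\v_1 = \v$, $\v_2 = \w$, $\v_3 = \x$ for one carefully chosen permutation $\sigma \in \S_6$. Since Theorem~\ref{thm:generalize_many_vecs} already delivers the bound $\|R_\sigma(T(\v\v^T \otimes \w\w^T \otimes \x\x^T))\|_{\textup{tr}} \le \|\v\|^2\|\w\|^2\|\x\|^2$, the whole content of the corollary is the explicit evaluation of that trace norm; the two extra terms on the right-hand side will appear only after a signed block contribution is moved across the inequality, exactly as the passage from Inequality~\eqref{eq:realign_explicit} to Inequality~\eqref{eq:cs_original} does in the $p = 2$ case.

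Concretely, I would proceed in four steps. First, write out $T(\v\v^T \otimes \w\w^T \otimes \x\x^T)$ from the twirl formula: its only nonzero entries are those indexed by triples $(i_1,i_2,i_3)$ and $(j_1,j_2,j_3)$ with equal multisets, and I would partition them according to the coincidence pattern of the indices --- all three equal, exactly two equal, or all distinct. Second, apply the realignment map $R_\sigma$ from Equation~\eqref{eq:multipartite_realign} and, after permuting the standard basis of $(\R^n)^{\otimes 3}$, collect the image into a block-diagonal matrix; the role of $\sigma$ is precisely to sort the various coincidence patterns into blocks that match the terms in the statement. Third, compute the trace norm block by block. The blocks responsible for the two sums are rank one, so their trace norms are products $\|u\|\,\|z\|$ of the norms of their defining vectors; collecting the surviving realigned entries that share a common output index $j$ into a vector $u_j$ gives $\|u_j\|^2 = v_j^2\|\v \odot \w\|^2 + w_j^2\|\v^2\|^2 - v_j^4 w_j^2$ (and the analogous $\|z_j\|^2$ with $\x$ in place of $\v$), which is exactly why each summand is a square root of a product; the second family of rank-one blocks, indexed by the ``all distinct'' pattern, produces the sum carrying the scalar prefactors $v_i x_i$. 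Fourth, evaluate the block coming from the fully diagonal indices $i_1 = i_2 = i_3$, whose contribution carries the signed quantity $\ip{\v \odot \w}{\x \odot \w}\ip{\v}{\x} - \|\v \odot \w \odot \x\|^2$; transferring it to the other side yields the stated right-hand side.

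The main obstacle is the combinatorial bookkeeping in the second and third steps, together with the correct treatment of that last, signed block. In the $p = 2$ case the surviving entries split cleanly into one rank-one block plus isolated $2 \times 2$ antidiagonal blocks, but for $p = 3$ there are genuinely several coincidence patterns, and one must verify which of them $\sigma$ merges into a common block and which it keeps separate. The delicate point is the fully diagonal block: because a trace norm is intrinsically a sum of nonnegative singular values, the off-diagonal signed sum $\ip{\v \odot \w}{\x \odot \w}\ip{\v}{\x} - \|\v \odot \w \odot \x\|^2$ (whose $p = 2$ analog is $\ip{\v}{\w}^2 - \ip{\v^2}{\w^2}$) can only emerge if the diagonal and near-diagonal entries land in a single rank-structured block whose trace norm collapses to this expression rather than to a sum of absolute values. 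Pinning down $\sigma$ so that this merger happens, and confirming the resulting block has the claimed trace norm, is the crux; the remaining identities $\|u_j\|^2 = \cdots$ and the evaluation of the all-distinct blocks are then routine index computations that I would carry out at the end.
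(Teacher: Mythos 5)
Your plan's skeleton is the paper's proof: apply Theorem~\ref{thm:generalize_many_vecs} with $p=3$ to $\v\v^T\otimes\w\w^T\otimes\x\x^T$ for one explicit permutation (the paper takes $\sigma=(6,5,3,4,2,1)$), block-diagonalize the realigned twirl up to row/column permutations, evaluate the trace norm block by block, and move the leftover contribution across the inequality; your identification $\|u_j\|^2=v_j^2\|\v\odot\w\|^2+w_j^2\|\v^2\|^2-v_j^4w_j^2$ is exactly the paper's first family of rank-one blocks $\a_j\b_j^T$ of size $(2n-1)\times(2n-1)$. But step four, which you yourself single out as the crux, is wrong as stated, and no choice of $\sigma$ can repair it in the form you propose. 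You want a block ``coming from the fully diagonal indices $i_1=i_2=i_3$'' whose trace norm equals $\ip{\v\odot\w}{\x\odot\w}\ip{\v}{\x}-\|\v\odot\w\odot\x\|^2$. A trace norm is nonnegative, while this quantity can be strictly negative (take $\v=(1,1)$, $\x=(1,-1)$, $\w=(1,0)$: then $\ip{\v}{\x}=0$ but $\v\odot\w\odot\x\neq\0$), so no block of any realignment can ever have it as its trace norm. Moreover, in the actual block structure there is no separate fully-diagonal block at all: the entries with all indices equal sit \emph{inside} the blocks $\a_j\b_j^T$ (e.g.\ the entry $v_j^2w_j$ of $\a_j$), and are already accounted for by your first sum.

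The signed terms enter through relaxation, not through exact evaluation of any block --- exactly the way Inequality~\eqref{eq:realign_explicit} yields Inequality~\eqref{eq:cs_original} in the $p=2$ case. There, the signed term $\ip{\v}{\w}^2-\ip{\v^2}{\w^2}$ does not come from the diagonal block $(\v^2)(\w^2)^T$ (whose trace norm is just $\|\v^2\|\|\w^2\|$), but from the $1\times1$ blocks $v_iv_kw_iw_k$ with $i\neq k$: their exact contribution is the sum of absolute values $\sum_{i\neq k}|v_iv_kw_iw_k|$, and one uses $|a|\geq a$ together with the telescoping identity $\sum_{i\neq k}v_iv_kw_iw_k=\ip{\v}{\w}^2-\ip{\v^2}{\w^2}$ before moving the result to the right-hand side. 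For $p=3$ the analogue is the third family in the paper's decomposition: $n(n-1)$ one-by-one blocks indexed by ordered pairs of distinct indices (the ``exactly two coincident indices'' pattern), contributing a sum of absolute values of degree-six monomials; dropping the absolute values gives a signed double sum that telescopes to a product of two inner products minus $\|\v\odot\w\odot\x\|^2$ (with the roles of the three vectors matching the statement after relabelling), and this is what gets moved across. The same $|a|\geq a$ relaxation is also needed to replace $|v_ix_i|$ by $v_ix_i$ in your second sum, a point your outline omits. With step four replaced by this argument, your plan becomes the paper's proof.
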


\begin{proof}
    Choose the permutation $\sigma = (6,5,3,4,2,1)$. Direct computation then reveals that the matrix $R_{\sigma}\big(T(\v\v^T\otimes \w\w^T \otimes \x\x^T)\big)$ is (up to permutation of its rows and columns) block diagonal with diagonal blocks as follows:

    \begin{itemize}
        \item $n$ blocks of size $(2n-1) \times (2n-1)$ each. If we define, for each $j \in \{1,2,\ldots,n\}$, vectors by
        \begin{align*}
            \mathbf{a}_j & = \big(v_j(v_1w_1),\ldots,v_j(v_nw_n), \ w_jv_1^2,\ldots,w_jv_{j-1}^2,w_jv_{j+1}^2,\ldots,w_jv_n^2\big) \quad \text{and} \\
            \mathbf{b}_j & = \big(x_j(x_1w_1),\ldots,x_j(x_nw_n), \ w_jx_1^2,\ldots,w_jx_{j-1}^2,w_jx_{j+1}^2,\ldots,w_jx_n^2\big),
        \end{align*}
        then these blocks are equal to $\a_j\b_j^T$. Since this block has rank~$1$, its trace norm is
        \[
            \big\|\a_j\b_j^T\big\|_{\textup{tr}} = \|\a_j\|\|\b_j\| = \sqrt{\big(v_j^2\|\v \odot \w\|^2 + w_j^2\|\v^2\|^2 - v_j^4 w_j^2\big)\big(x_j^2\|\x \odot \w\|^2 + w_j^2\|\x^2\|^2 - x_j^4 w_j^2\big)}.
        \]
        \item $n$ blocks of size $(n-1)(n-2) \times (n-1)(n-2)$ each. We define, for each $i \in \{1,2,\ldots,n\}$, the vector $\y_i \in \R^{(n-1)(n-2)}$ to have entries equal to all possible products of the form $v_iv_jw_k$ with $1 \leq j,k \leq n$ and none of $i$, $j$, or $k$ equal to each other. We similarly define $\z_i \in \R^{(n-1)(n-2)}$ to have entries equal to all possible products of the form $x_ix_j^2w_k^2$. Then these blocks are equal to $\y_i\z_i^T$ and have trace norm equal to
        \[
            \big\|\y_i\z_i^T\big\|_{\textup{tr}} = \|\y_i\|\|\z_i\| = |v_ix_i|\sqrt{\sum_{j=1,j\neq i}^n\sum_{k=1,k\neq i,k \neq j}^n v_j^2 w_k^2}\sqrt{\sum_{j=1,j\neq i}^n\sum_{k=1,k\neq i,k \neq j}^n x_j^2 w_k^2}.
        \]

        \item $n(n-1)$ blocks of size $1 \times 1$ each. These diagonal entries are equal to $v_iw_iv_jw_jx_i^2$ for all $1 \leq i \neq j \leq n$, so they contribute a total of
        \[
            \sum_{i=1}^n \sum_{j=1,j\neq i}^n x_i^2|v_iw_iv_jw_j|
        \]
        to the trace norm of $R_{\sigma}\big(T(\v\v^T\otimes \w\w^T \otimes \x\x^T)\big)$.
    \end{itemize}
    Since the trace norm is unitarily invariant (and thus invariant under permutations of its rows and columns) and the trace norm of a block-diagonal matrix equals the sum of the trace norms of its diagonal blocks, we have
    \begin{align*}
        \big\|R_{\sigma}\big(T(\v\v^T\otimes \w\w^T \otimes \x\x^T)\big)\big\|_{\textup{tr}} & = \sum_{j=1}^n \sqrt{\big(v_j^2\|\v \odot \w\|^2 + w_j^2\|\v^2\|^2 - v_j^4 w_j^2\big)\big(x_j^2\|\x \odot \w\|^2 + w_j^2\|\x^2\|^2 - x_j^4 w_j^2\big)} \\
        & + \sum_{i=1}^n |v_ix_i|\sqrt{\sum_{j=1,j\neq i}^n\sum_{k=1,k\neq i,k \neq j}^n v_j^2 w_k^2}\sqrt{\sum_{j=1,j\neq i}^n\sum_{k=1,k\neq i,k \neq j}^n x_j^2 w_k^2} \\
        & + \sum_{i=1}^n \sum_{j=1,j\neq i}^n x_i^2|v_iw_iv_jw_j|.
    \end{align*}
    Plugging this expression into Theorem~\ref{thm:generalize_many_vecs} and rearranging slightly produces the desired corollary.
\end{proof}

In fact, the inequality described by Corollary~\ref{cor:tripartite_cs} is essentially the only one that can be obtained from Theorem~\ref{thm:generalize_many_vecs} in the $p = 3$ case. There are $6! = 720$ different permutations that we can choose in that theorem when $p = 3$, but direct computation shows that they all result in one of the following:
\begin{itemize}
    \item The trivial inequality $\|\v\|^2\|\w\|^2\|\x\|^2 \leq \|\v\|^2\|\w\|^2\|\x\|^2$ (e.g., if $\sigma$ is the identity permutation);

    \item The inequality of Corollary~\ref{cor:tripartite_cs}; or

    \item The inequality of Corollary~\ref{cor:tripartite_cs} with the roles of the vectors $\v$, $\w$, and $\x$ permuted.
\end{itemize}

\section{Generalization to Non-Integer Exponents}\label{sec:non_integer}

Thanks to Corollary~\ref{cor:equal_tensors}, we know that Inequality~\eqref{eq:cs_original} generalizes to positive integer exponents $p$ other than just $p = 2$. It is, therefore, only natural to ask whether the same is true of (positive) non-integer exponents.

Before proceeding, we note that throughout this section we will restrict our attention to vectors $\v$ and $\w$ with non-negative real entries---otherwise, the vectors $\v^p$ and $\w^p$ would have complex entries, making it difficult to discuss inequalities involving these quantities. We could put absolute values around quantities like $\ip{\v^p}{\w^p}$ and $\ip{\v}{\w}^p$ to get around this issue, but for simplicity, we focus our attention on real-valued quantities only.

It is not difficult to find examples to show that if $p \in (0,1)$, then we cannot in general say that $\|\v^p\|\|\w^p\| - \ip{\v^p}{\w^p}$ is bigger or smaller than $\|\v\|^p\|\w\|^p - \ip{\v}{\w}^p$. For instance, if $\v = (1,1)$ and $\w = (1,2)$ then we have
\[
    \|\v^p\|\|\w^p\| - \ip{\v^p}{\w^p} = \sqrt{2 + 2^{2p+1}} - (1 + 2^p)
\]
and
\[
     \|\v\|^p\|\w\|^p - \ip{\v}{\w}^p = (\sqrt{10})^p - 3^p.
\]
Standard calculus techniques can be used to show that $(\sqrt{10})^p - 3^p > \sqrt{2 + 2^{2p+1}} - (1 + 2^p)$ for all $p \in (0,1)$, so for this choice of $\v$ and $\w$, we have $\|\v\|^p\|\w\|^p - \ip{\v}{\w}^p > \|\v^p\|\|\w^p\| - \ip{\v^p}{\w^p}$ for all $p \in (0,1)$. On the other hand, if $\v = (1,1)$ and $\w = (0,1)$, then we have
\[ 
    \|\v^p\|\|\w^p\| - \ip{\v^p}{\w^p} = \sqrt{2} - 1
\]
and 
\[
    \norm{\v}^p\norm{\w}^p - \ip{\v}{\w}^p = (\sqrt{2})^p - 1.
\]
Since $(\sqrt{2})^p < \sqrt{2}$ for all $p \in (0,1)$, it follows that for this choice of $\v$ and $\w$, we also have $\|\v\|^p\|\w\|^p - \ip{\v}{\w}^p < \|\v^p\|\|\w^p\| - \ip{\v^p}{\w^p}$ for all $p \in (0,1)$.

Similar examples show that $\|\v^p\|\|\w^p\| - \ip{\v^p}{\w^p}$ and $\|\v\|^p\|\w\|^p - \ip{\v}{\w}^p$ are incomparable when $p \in (1,2)$ as well. For example, if $\v = (1,1)$ and $\w = (0,1)$, then we get
\[
    \|\v\|^p\|\w\|^p - \ip{\v}{\w}^p = (\sqrt{2})^p - 1 > \sqrt{2} - 1 = \|\v^p\|\|\w^p\| - \ip{\v^p}{\w^p}
\]
for all $p \in (1,2)$. Examples demonstrating the opposite inequality for $p \in (1,2)$ are slightly more complicated, but they exist:

\begin{example}\label{exam:p12}
    Define the vectors $\v = (1,1)$ and $\w_\varepsilon = (1,1+\varepsilon)$. We claim that for each $p \in (1,2)$, there exists an $\varepsilon > 0$ sufficiently small so that
    \begin{align}\label{ineq:p12_counter}
        \|\v\|^p\|\w_\varepsilon\|^p - \ip{\v}{\w_\varepsilon}^p < \|\v^p\|\|\w_\varepsilon^p\| - \ip{\v^p}{\w_\varepsilon^p}.
    \end{align}
    
    To verify this claim, recall that the binomial series
    \[
        (1 + x)^p = \sum_{n=0}^\infty \binom{p}{n} x^n
    \]
    converges whenever $|x| < 1$ (and we extend the binomial coefficients to non-integer values of $p$ in the usual way via $\binom{p}{n} = p(p-1)\cdots(p-n+1)/n!$). Then we can compute binomial series expansions for each of the four terms appearing in Inequality~\eqref{ineq:p12_counter} as follows:
    \begin{align*}
        \ip{\v^p}{\w_\varepsilon^p} & = 1 + (1 + \varepsilon)^p = 1 + \sum_{n=0}^\infty \binom{p}{n} \varepsilon^n, \\
        \ip{\v}{\w_\varepsilon}^p & = (2 + \varepsilon)^p = 2^p\left(1 + \frac{\varepsilon}{2}\right)^p = 2^p\sum_{n=0}^\infty \binom{p}{n} \left(\frac{\varepsilon}{2}\right)^n, \\
        \|\v\|^p\|\w_\varepsilon\|^p & = \big(\sqrt{1^2 + 1^2}\big)^p\big(\sqrt{1^2 + (1 + \varepsilon)^2}\big)^p\\
        & = 2^p\left(1 + \varepsilon + \frac{\varepsilon^2}{2}\right)^{p/2} \\
        & = 2^p \sum_{n=0}^\infty \binom{p/2}{n} \left(\varepsilon + \frac{\varepsilon^2}{2}\right)^n, \quad \text{and} \\
        \|{\v^p}\|\|{\w_\varepsilon^p}\| & = \sqrt{2}\sqrt{1 + (1 + \varepsilon)^{2p}} \\
        & = \sqrt{2}\sqrt{1 + \sum_{n=0}^\infty \binom{2p}{n} \varepsilon^n} \\
        & = 2\sqrt{1 + \frac{1}{2}\sum_{n=1}^\infty \binom{2p}{n} \varepsilon^n} \\
        & = 2\sum_{m=0}^\infty \binom{1/2}{m}\left(\frac{1}{2}\sum_{n=1}^\infty \binom{2p}{n} \varepsilon^n\right)^m.
    \end{align*}
    Expanding the first few terms of these sums explicitly, the terms involving $\varepsilon^0$ and $\varepsilon^1$ cancel out, and we have
    \begin{align*}
        \|\v\|^p\|\w_\varepsilon\|^p - \ip{\v}{\w_\varepsilon}^p & = 2^p \left(\sum_{n=0}^\infty\left[\binom{p/2}{n} \left(\varepsilon + \frac{\varepsilon^2}{2}\right)^n - \binom{p}{n} \left(\frac{\varepsilon}{2}\right)^n\right]\right) \\
        & =\frac{p 2^p}{8}\varepsilon^2 + O(\varepsilon^3)
    \end{align*}
    and
    \begin{align*}
        \|\v^p\|\|\w_\varepsilon^p\| - \ip{\v^p}{\w_\varepsilon^p} & = 2\sum_{m=0}^\infty \binom{1/2}{m}\left(\frac{1}{2}\sum_{n=1}^\infty \binom{2p}{n} \varepsilon^n\right)^m - \left(1 + \sum_{n=0}^\infty \binom{p}{n} \varepsilon^n\right) \\
        & = \frac{p^2}{4}\varepsilon^2 + O(\varepsilon^3).
    \end{align*}
    It is straightforward to show that $p^2/4 > p2^p/8$ when $p \in (1,2)$; thus, we conclude that Inequality~\eqref{ineq:p12_counter} holds for sufficiently small $\varepsilon > 0$.
\end{example}

The above examples show that, despite Inequality~\eqref{ineq:integer_p} holding for all positive integers $p$, it cannot be extended to non-integers in the intervals $(0,1)$ or $(1,2)$. We conjecture that these two intervals comprise precisely the only positive real numbers for which the inequality does not hold:

\begin{conjecture}\label{conj:non_integer}
    Let $p \geq 2$ be a real number. For any pair of vectors $\v,\w \in \R_{>0}^n$, we have
    \begin{align}\label{ineq:nonint_conj}
        \| \v^p \|\| \w^p \| - \ip{\v^p}{\w^p} \leq \|\v\|^{p}\|\w\|^{p} - \ip{\v}{\w}^{p}.
    \end{align}
\end{conjecture}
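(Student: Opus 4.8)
The plan is to reduce the conjecture, for each fixed pair $\v,\w \in \R_{>0}^n$, to the positivity of a single real exponential sum in the variable $p$, and then to control the real zeros of that sum. Since $\v,\w$ have positive entries we have $\ip{\v}{\w} > 0$, so the right-hand side is $f_p(\v,\w) = \|\v\|^p\|\w\|^p - \ip{\v}{\w}^p \ge 0$. First I would isolate the square root by writing the claim as $\|\v^p\|\,\|\w^p\| \le f_p(\v,\w) + \ip{\v^p}{\w^p}$; as both sides are non-negative this is equivalent to its square, and using Lagrange's identity~\eqref{eq:lagrange} on $\v^p,\w^p$ to evaluate $\|\v^p\|^2\|\w^p\|^2 - \ip{\v^p}{\w^p}^2$ converts the conjecture into the square-root-free form
\[
    \frac12\sum_{i=1}^n\sum_{\substack{j=1 \\ j\neq i}}^n\big(v_i^p w_j^p - v_j^p w_i^p\big)^2 \;\le\; f_p(\v,\w)^2 + 2\,f_p(\v,\w)\,\ip{\v^p}{\w^p}.
\]

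Writing $\alpha = \|\v\|\,\|\w\|$ and $\beta = \ip{\v}{\w}$, every term here is a finite sum of powers $\mu^p$ with a positive base: $f_p(\v,\w)^2 = (\alpha^2)^p - 2(\alpha\beta)^p + (\beta^2)^p$, the cross term equals $2\sum_i\big((\alpha v_iw_i)^p - (\beta v_iw_i)^p\big)$, and the left-hand side equals $\sum_{i\neq j}(v_i^2w_j^2)^p - \sum_{i\neq j}(v_iv_jw_iw_j)^p$. Thus the conjecture at $\v,\w$ is precisely the statement that the generalized Dirichlet polynomial $D(p) = \sum_k c_k\,\mu_k^p$ obtained by moving everything to one side — with real coefficients $c_k$ and positive bases $\mu_k$ — is non-negative for all $p \ge 2$. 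By P\'olya's extension of Descartes' rule of signs, after merging equal bases and ordering them increasingly, the number of real zeros of $D$ on $(0,\infty)$ is at most the number of sign changes of the coefficient sequence. I would therefore aim to prove that this count is at most \emph{two}. Granting that, the conjecture follows: the largest base is $\alpha^2 = \|\v\|^2\|\w\|^2$ with coefficient $+1$ (the degenerate ties, such as $\v \parallel \w$, being trivial and handled separately), so $D(p) \to +\infty$; since $D(1) = 0$ identically and $D(2) \ge 0$ is exactly the original Inequality~\eqref{eq:cs_original}, at most two zeros forces any negativity of $D$ to occur left of $p = 2$, giving $D \ge 0$ on $[2,\infty)$.

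The crux — and the step I expect to be hard — is the bound of two on the number of sign changes of $D$. A naive count is hopeless: the positive contributions (from $\alpha^{2p}$, $\beta^{2p}$, the $(\alpha v_iw_i)^p$, and the $(v_iv_jw_iw_j)^p$) and the negative ones (from $-2(\alpha\beta)^p$, the $-(\beta v_iw_i)^p$, and the $-(v_i^2w_j^2)^p$) have bases that interleave in an order depending delicately on $\v$ and $\w$, so a priori the coefficient sequence can change sign many times. What is needed is a structural reason that $D$ nonetheless has at most two real zeros; the most promising route seems to be an induction on $n$, establishing the $n = 2$ base case by direct analysis of the (then short) exponential sum $D$, and showing that adjoining one coordinate cannot create a zero of $D$ in $(2,\infty)$. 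This is precisely where the hypothesis $p \ge 2$ must enter in an essential way: Example~\ref{exam:p12} shows that $D$ really does go negative for some $p \in (1,2)$, and crude estimates are self-defeating — for instance $\|\v^p\|\,\|\w^p\| \le \|\v\|^p\|\w\|^p$ points the wrong way, as it would require the false inequality $\ip{\v}{\w}^p \le \ip{\v^p}{\w^p}$. Finally, I note that the Hadamard/tensor machinery of Section~\ref{sec:higher_exponents} cannot be invoked directly, since Corollary~\ref{cor:fp_projection_diagonal} only ever produces integer entrywise powers, so a genuinely analytic ingredient of the above type is unavoidable.
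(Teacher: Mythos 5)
First, a point of order: there is no ``paper's own proof'' to compare against here --- Inequality~\eqref{ineq:nonint_conj} is stated as Conjecture~\ref{conj:non_integer} precisely because the authors could not prove it, and the paper offers only numerical evidence. Your submission is likewise not a proof but a strategy, and you concede as much by deferring the bound on sign changes as ``the step I expect to be hard.'' The reduction itself is correct and tidy: since $f_p(\v,\w)\ge 0$ and both sides are non-negative, squaring is an equivalence, and Lagrange's identity~\eqref{eq:lagrange} applied to $\v^p,\w^p$ turns the conjecture into the non-negativity, for $p\ge 2$, of the exponential sum
\[
D(p) \;=\; f_p(\v,\w)^2 + 2f_p(\v,\w)\ip{\v^p}{\w^p} - \tfrac12\sum_{i\ne j}\big(v_i^pw_j^p-v_j^pw_i^p\big)^2 \;=\; \sum_k c_k\,\mu_k^p
\]
with real coefficients and positive bases; moreover $D(0)=D(1)=0$ identically and $D(2)\ge 0$ is the known case~\eqref{eq:cs_original}.

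The crux, however, is not merely hard; as you have formulated it, it is false, so the plan cannot be completed. Laguerre's rule bounds the number of real zeros of $D$ on \emph{all} of $\R$ by the number of sign changes, and $p=0$ and $p=1$ are always zeros. If the sorted coefficient sequence had at most two sign changes, then $D$ could have no further real zeros, hence no zeros in $(1,\infty)$, hence $D>0$ there (since $D\to+\infty$); that is, the inequality would hold for \emph{every} $p>1$ and every positive pair $\v,\w$. This contradicts Example~\ref{exam:p12}, which exhibits positive vectors and $p_0\in(1,2)$ with $D(p_0)<0$; for those vectors the coefficient sequence must have at least three sign changes (zeros at $0$, at $1$, and a third in $(p_0,2]$). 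Worse, the sign-change count cannot be usefully bounded at all: already for $n=2$, $\v=(1,1)$, $\w=(1,2)$, the bases sorted increasingly are $1,\,2,\,3,\,\sqrt{10},\,4,\,6,\,2\sqrt{10},\,9,\,3\sqrt{10},\,10$ with merged coefficients $-1,+2,-2,+2,-1,-2,+2,+1,-2,+1$, i.e.\ \emph{seven} sign changes, so the rule of signs permits up to seven zeros and yields nothing --- and this interleaving only worsens as $n$ grows. Consequently even the weaker target that would actually suffice for your endgame (at most three real zeros of $D$) is unreachable by counting sign changes, and your proposed induction on $n$ is not carried out; adjoining a coordinate inserts both positive and negative terms at interleaved bases, and you give no mechanism preventing new zeros in $(2,\infty)$. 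The conjecture remains open.
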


We have numerically tested this conjecture extensively for $2 \leq n \leq 10$ and non-integer values of $p \in (2,10)$, and after more than $10^{10}$ randomly-generated vectors, we have found no counterexamples. This inequality, if true, is tight, since the left- and right-hand sides are equal (and both equal to $0$) when $\v = \w = \e_1$.

To illustrate the strength of this conjecture, consider the plots presented in Figure~\ref{fig:noninteger_conjecture}, which were created by $10^4$ randomly generated pairs of unit vectors and randomly generated values of $p \in [0,5]$. Each dot in the figure shows the value of $p$ on the $x$-axis and the value of the right-hand side of Inequality~\eqref{ineq:nonint_conj} minus its left-hand side on the $y$-axis. The conjecture states that if $p \in [2,\infty)$, then this quantity is non-negative, which clearly seems to be true. We see that if $p \in (0,1)$ or $p \in (1,2)$, then this claim is no longer true, since when $n = 2$, some dots in these regions are below the $x$-axis, as we noted in our earlier examples (like Example~\ref{exam:p12}). When $n = 3$, Figure~\ref{fig:noninteger_n3} perhaps looks like it does not have any points with $\text{RHS} - \text{LHS} < 0$, but they are all just close enough to the axis that we cannot quite see them. For instance, there is a point located at approximately $(p, \text{RHS} - \text{LHS}) = (1.2403, -0.0036)$. Furthermore, any $2$-dimensional example can be embedded in higher dimensions to give examples with $\text{RHS} - \text{LHS} < 0$ for $p \in (1,2)$ and any $n \geq 2$.

\input{fig2.tex}

We note that the top curve of these figures appears to be
\[
    \text{RHS} - \text{LHS} = 1 - \left(\left\lfloor \frac{n+1}{2} \right\rfloor \cdot \left\lceil \frac{n+1}{2} \right\rceil\right)^{(1-p)/2},
\]
which is attained by the normalizations of the vectors $\v = (1,\ldots,1,0,\ldots,0)$ and $\w = (0,\ldots,0,1,\ldots,1)$, where $\v$ has $\lfloor (n+1)/2\rfloor$ nonzero entries and $\w$ has $\lceil (n+1)/2\rceil$ nonzero entries, so there is a single entry (near the center of these vectors) that is nonzero in both $\v$ and $\w$.

\section*{Acknowledgements}
The authors thank Carlo Beenakker for providing the key insight that led to Figure~\ref{fig:geometric_proof} and the geometric discussion in Section~\ref{sec:intuitive_geometric} \cite{BeeMO}. N.J.\ was supported by NSERC Discovery Grant number RGPIN-2022-04098. S.P.\ was supported by NSERC Discovery Grant number 1174582, the Canada Foundation for Innovation (CFI) grant number 35711, and the Canada Research Chairs (CRC) Program grant number 231250.

\bibliographystyle{alpha}
\bibliography{ref}
\end{document}